\newcommand{\N}{\mathbb{N}}
\newcommand{\R}{\mathbb{R}}
\newcommand{\cL}{{\mathcal{L}}}
\newcommand{\cW}{{\mathcal{W}}}
\newcommand{\Span}{\operatorname{span}}
\def\blfootnote{\xdef\@thefnmark{}\@footnotetext}
\newcommand{\xc}[1]{\vspace{.1cm}

\noindent {\em #1} }
\newcommand{\mab}[1]{\vspace{.1cm}

\noindent {\textbf{#1 }}} 
\newtheorem{definition}{Definition}
\newtheorem{theorem}{Theorem}[section]
\newtheorem{lemma}{Lemma}[section]
\newtheorem{proposition}[theorem]{Proposition}
\newtheorem{corollary}[theorem]{Corollary}
\newtheorem*{mainth*}{Main Theorem}
\title[Super-linearization and polynomial automorphisms]{On the invariance of super-linearization under polynomial automorphisms}
\begin{document}

\author[Harshana]{Anmol Harshana}
\address[Harshana]{Department of Aerospace Engineering and Coordinated Science Laboratory, University of Illinois, Urbana-Champaign}
\email[Harshana]{anmolh2@illinois.edu}

\author[Belabbas]{Mohamed-Ali Belabbas}
\address[Belabbas]{Electrical and Computer Engineering Department and Coordinated Science Laboratory, University of Illinois, Urbana-Champaign.}
\email[Belabbas]{belabbas@illinois.edu}

\maketitle

\begin{abstract}

We prove that the super-linearizability of polynomial systems is preserved by all currently known classes of polynomial automorphisms of $\R^n$. We then establish connections between such automorphisms and a sufficient condition for super-linearizability. 
\end{abstract}

\section{Introduction}

Consider the autonomous system defined by the equation:

\begin{equation}\label{eq:mainsys}
\dot x = f(x)\mbox{ with } f: \R^n \to \R^n
\end{equation}

where $f$ is a polynomial vector field. This system is said to be {\em super-linearizable} if the trajectories of the vector field $f$ are a (linear) projection of the trajectories of a linear autonomous system, albeit one evolving in a higher-dimensional state-space. A more precise definition is provided in Definition~\ref{defn:lifted_super_linearizable}.
It is important to note that not all polynomial systems are super-linearizable. Determining checkable necessary and sufficient conditions for super-linearizability of a general dynamical system (or even a polynomial system) remains an open problem. However, a checkable {\em sufficient} condition was proposed in~\cite{belabbas2023sufficient}, which we refer to as the {\em WDG condition}. We will revisit this condition in Section~\ref{ssec:wdg}.

In this paper, we study the effect of polynomial coordinate transformations (referred to as {\em polynomial automorphisms}) on the super-linearizability of a polynomial system. More generally, we are interested in the invariance  properties of the space of super-linearizable vector fields under coordinates changes; i.e., we are interested in the question:  {\em Given a super-linearizable  vector field and an automorphism (i.e., a change of variables), does it hold that the vector field obtained after applying the automorphism is super-linearizable?}

The general answer to that question is negative.  Indeed,  consider the following example: the linear system
\begin{equation*}
\left\{ 
\begin{aligned}
    \dot x_1 &= x_2\\
    \dot x_2 &=x_1
    \end{aligned}
    \right.
\end{equation*}
obviously belongs to the class of super-linearizable  polynomial systems; applying the change of coordinates $(z_1,z_2)=(\sinh(x_1), \sinh(x_2))$, we obtain the dynamics \begin{equation}\label{eq:example_sinh}
    \left\{\begin{aligned}
    \dot z_1 = \sqrt{1+z_1^2}\sinh^{-1}{z_2}\\ 
    \dot z_2 = \sqrt{1+z_2^2}\sinh^{-1}{z_1}.
\end{aligned}\right.
\end{equation}
It can be shown that system~\eqref{eq:example_sinh} is not super-linearizable.
We provide proof of that fact in Appendix~\ref{subsec:ex1_proof}.

\vspace{.1cm}
\noindent {\bf Tame, Stably Tame and Wild Automorphisms.} We  provide a brief, high-level account of what is known about polynomial automorphisms.
By polynomial automorphism, we mean  a polynomial diffeomorphism of $\R^n$ whose inverse is also polynomial.

Polynomial automorphisms have been classified into the so-called {\em tame} and {\em wild} automorphisms~\cite{shestakov2004tame}. We provide precise definitions below but mention here that tame automorphisms are constructively defined as compositions of invertible affine transformations and {\em elementary automorphisms}, and wild automorphisms are the ones that are not tame.
It is known that if $n=2$, all automorphisms are tame~\cite{kuttykrishnan2009some, jung1942ganze}. In dimension $n=3$,  there are a {\em few} known examples of wild automorphisms, including Nagata's~\cite{shestakov2003nagata} and Anick's~\cite{umirbaev2007anick} automorphisms.

However, both of the above-mentioned wild automorphisms are {\em stably tame}, i.e., when embedded in $\R^{n'}$ for $n'>n$  in a way made precise in the following section, they are {\em tame} automorphisms; see~\cite{smith1989stably, kuttykrishnan2009some}. In fact, all the known examples of wild automorphisms in $\R^n$ have been shown to be stably tame and whether wild, non-stably tame automorphisms exist in $\R^n$ is an open question.

\vspace{.1cm}
\noindent {\bf Our Contribution.} The main contribution of this paper is to show that the class super-linearizable polynomial vector fields is closed under automorphism. Said otherwise, if a polynomial vector field is super-linearizable, then the vector field obtained after a polynomial change of variables is super-linearizable.

Our second contribution deals with the class of polynomial vector fields that meet the so-called {\em WDG condition}. As mentioned earlier, determining whether a polynomial vector field is super-linearizable is an open problem. However, there exists an efficiently checkable condition, the WDG condition described below in Section~\ref{ssec:wdg}, that characterizes a (strict) subclass of super-linearizable polynomial systems. It is then natural to ask whether this subclass is closed under polynomial automorphisms. We show that the answer is negative; however, taking inspiration from the definition of stably tame automorphisms, adding what we called below ``stabilizing'' observables, we can exhibit cases when the WDG condition is preserved.

\vspace{.1cm}
\noindent{\bf Related Works.}
Global linearization of nonlinear systems is linked to the Koopman Operator or Carleman Linearization, both of which map the nonlinear system to a linear system  in infinite dimensions. For general applications, it is necessary to find a finite approximation of the same~\cite{bevanda2021koopman} and calculate error bounds on using limited dimensions~\cite{amini2022carleman}. 

The concept of a global linearization of ordinary differential equations has also been studied in algebra under the label of polyflows (see~\cite{bass1985polynomial, van1994locally, hosler1989polynomial}), i.e., when can the solutions of nonlinear ordinary differential equations be represented as a polynomial of the initial conditions at each instant of time. The general forms of polynomial vector fields in 2 dimensions that give polyflows as solutions have been studied in this context~\cite{bass1985polynomial, hosler1989polynomial}.

More recently, Jungers and Tabuada~\cite{jungers2019non} used up to $N^{th}$ Lie derivatives to linearize a nonlinear differential equation while accounting for asymptotic stability. 
In~\cite{ko2024minimum}, Ko and Belabbas derived the minimum number of observables required for super-linearization and proposed two invariants that arise in strong super-linearizations.
In ~\cite{arathoon2023koopman}, Kvalheim and Arathoon exhibit counterexamples to the claim of super-linearization of a system being only possible when there is a single or a continuum of equilibria. In ~\cite{kvalheim2023linearizability}, the same authors provide a detailed topological study of fundamental features and limitations of applied Koopman theory.

\vspace{.1cm}
\noindent{\bf Notation and Conventions.}
We denote by ${\mathcal P}_n$  the space of polynomial vector fields of $\R^n$ in $n$ variables. The subset of ${\mathcal P}_n$ consisting of super-linearizable polynomial vector fields is denoted by ${\mathcal S}_n$. The subset of super-linearizable systems ${\mathcal S}_n$ that satisfy the WDG condition as explained in \ref{thm:wdg} is denoted by $\cW_n$ We will denote by $e^{tf}x_0$ the trajectory of $\dot x = f(x)$, with $x(0)=x_0$,  evaluated at $t$. If $f=Ax$ is a linear vector field, we simply write $e^{At}x_0$. 
We let $\Pi_n:\R^m \to \R^n$, $m \geq n$, denote the canonical projection map on  the first $n$ coordinates: namely, give the  vector $z=(z_1,\ldots, z_m)$, $\Pi(z)=(z_1,\ldots, z_n)$. We denote by
$D \phi_x$ the Jacobian of $\phi(x)$ evaluated at $x$. Given a vector field $f$ and a function $p$, we let $(\cL_f p)(x) := (D_{p(x)}f(x))$. We denote the $i^{th}$ component of vector $v$ by $[v]_i$ or $v_i$.

\section{Background and Statement of the main results}\label{sec:background}

We provide in this section the key definitions relevant to our work and then proceed to state the main results. We start with the definition of a super-linearizable system/vector field:

\begin{definition}[Lifted and Super-linearizable system]\label{defn:lifted_super_linearizable}
    Let $f:\R^n \to \R^n$ and $p:\R^n \to \R^k$ be polynomials. Then $\tilde f:\R^{n+k}\to \R^{n+k}$ is a lift of $f$ by $p$ if
$$
\Pi_n e^{t \tilde f}z_0 = e^{tf}x_0 \mbox{ for all } t \geq 0
$$
and $z_0 = [x_0,p(x_0)]^\top$. 
If $\tilde f(z) =A z$ for some $A\in\R^{(n+k)\times(n+k)}$, then $f$ is called {\em super-linearizable}. 
\end{definition}

Said otherwise,  the trajectory of the  system $\dot z = \tilde f(z)$, initialized at $(x_0,p(x_0))$, equals the trajectory of the original system initialized at $x_0$ after projection by $\Pi$. The entries of $p$ are referred to as {\em observables}.

For example, consider $f(x) = \begin{bmatrix}
    x_1, & x_2+x_1^2
\end{bmatrix}^\top $ and $g(x) = x_1^2$. Then a lifted system with $p=g(x)$ is $ \tilde f = \begin{bmatrix}
z_1, & z_2+z_3, & 2z_3
\end{bmatrix}^\top $ and $z_0 =\begin{bmatrix}x_{0,1}, &  x_{0,2}, &x_{0,1}^2\end{bmatrix}^\top$

We now provide the definitions related to the polynomial automorphisms of $\R^n$. The simplest ones are the affine transformations:
\begin{definition}[Affine Transformation]\label{defn:2}
    An {\em affine transformation} is an automorphism $\phi: \R^n \rightarrow \R^n$ of the form $\phi(x) = Ax + b$ where $b\in \R^n$ and $A\in \R^{n \times n}$ is invertible.
\end{definition}

The following class of automorphisms is the building block of the more complex classes described below:
\begin{definition}[Elementary Transformation]\label{defn:3}
    An {\em elementary transformation} is an automorphism $\phi: \R^n \rightarrow \R^n $ of the form $$\phi(x) = [x_1,x_2,\ldots x_{n-1},x_n + g(x_1,x_2,\ldots x_{n-1})]^\top$$ where $x = [x_1,x_2\ldots x_n]^\top \in \R^n $ and $g(x_1,x_2,\ldots x_{n-1})$ is a polynomial of degree greater than 1.
\end{definition}

We note that $g$ is {\em any} polynomial that depends only on $x_1,\ldots x_{n-1}$, and that it enters at the $n$th entry of $\phi$. This particular form ensures that $\phi$ is indeed invertible with a polynomial inverse and, thus, an automorphism. In fact, we can write its inverse explicitly as as follows: setting $y = \phi(x) = [x_1,x_2,\ldots x_{n-1},x_n + g(x_1,x_2,\ldots x_{n-1})]^\top $, we obtain  $$x = \phi^{-1} (y) = [y_1,y_2,\ldots,y_n - g(y_1,y_2,\ldots y_{n-1})]^\top.$$

Since the permutation matrices are affine transformations, by composing an affine transformation with an elementary automorphism, we can obtain automorphisms of the type $$(x_1,\ldots, x_n) \mapsto (x_1,\ldots,x_{i}+g(x_1,\ldots,\hat x_i,\ldots, x_n),\ldots, x_n),$$ for any $i \in \{1,\ldots,n\}$
where $\hat x_i$ indicates that this variable is omitted.

We can now define the main class of known  polynomial  automorphisms, namely the tame automorphisms: 

\begin{definition}[Tame and Wild Automorphisms]\label{defn:4}
    A {\em tame automorphism} is a finite composition of affine and elementary transformations. A wild automorphism is an automorphism that is not tame.
\end{definition}

The following definition introduces an extension to the notion of tame automorphism, which is reminiscent of the definition of super-linearization:

\begin{definition}[Stably Tame Automorphisms]\label{defn:6}
    A polynomial automorphism $\psi:\R^n \rightarrow \R^n$ is called {\em stably tame} if there exists $m>0$ and a tame automorphism $\phi:\R^{n+m} \rightarrow \R^{n+m}$ and a polynomial map $y:\R^n \to \R^m$ such that $$\Pi_n(\phi(x,y(x))) = \psi(x)\mbox{ for all } x\in \R^n.$$ The entries of $y$ are referred to as {\em stabilizing variables}.
 
\end{definition}

We can now state the main results of the paper. We recall that by transformation of a polynomial system, we mean a change of variables.

\begin{theorem}\label{thm:tame}
    The class of polynomial super-linearizable systems is closed under transformation by tame automorphisms. 
\end{theorem}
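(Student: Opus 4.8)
The plan is to trade the flow-based Definition~\ref{defn:lifted_super_linearizable} for an equivalent algebraic criterion and then exploit that $\cL_f$ is a derivation. First I would prove that $f$ is super-linearizable if and only if the coordinate functions $x_1,\dots,x_n$ lie in a finite-dimensional subspace $V\subseteq\R[x_1,\dots,x_n]$ that is invariant under the Lie derivative $\cL_f$. The only delicate direction is extracting such a $V$ from the projection condition: if $\tilde f=A$ is a linear lift of $f$ by $p$, set $\psi(x)=[x,p(x)]^\top$ and $V_0=\Span\{x_1,\dots,x_n,p_1,\dots,p_k\}$. Differentiating the identity $[e^{tf}x]_i=[e^{At}\psi(x)]_i$ repeatedly at $t=0$ gives $\cL_f^m x_i=[A^m\psi]_i\in V_0$ for all $m$, so $\Span\{\cL_f^m x_i : m\ge 0,\ i\le n\}$ is a finite-dimensional, $\cL_f$-invariant space containing the coordinates. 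The converse is constructive: given such a $V$, pick a basis refining $x_1,\dots,x_n$ and read the matrix of $\cL_f|_V$ off that basis to recover an explicit linear lift.

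The engine of the proof is then the observation that, for a super-linearizable $f$, the set $\mathcal{A}_f$ of all polynomials contained in some finite-dimensional $\cL_f$-invariant subspace is in fact a subalgebra of $\R[x_1,\dots,x_n]$. It is evidently a linear subspace; closure under products holds because $\cL_f$ is a derivation, so that if $V_h,V_k$ are finite-dimensional invariant spaces containing $h,k$, then $\Span\{ab:a\in V_h,\,b\in V_k\}$ is finite-dimensional, invariant by the Leibniz rule $\cL_f(ab)=(\cL_f a)b+a(\cL_f b)$, and contains $hk$. Since $\mathcal{A}_f$ always contains the constants and, by the criterion, the coordinate functions $x_1,\dots,x_n$, it must equal all of $\R[x_1,\dots,x_n]$. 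Thus super-linearizability of $f$ forces \emph{every} polynomial to generate a finite-dimensional $\cL_f$-invariant space.

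With this in hand the theorem is immediate. For any polynomial automorphism $\phi$ (with $y=\phi(x)$), the transformed field $g=\phi_*f$ given by $g(y)=D\phi_{\phi^{-1}(y)}f(\phi^{-1}(y))$ is again polynomial, and the pullback $\phi^*h=h\circ\phi$ is an algebra isomorphism $\R[y]\to\R[x]$ intertwining the Lie derivatives, $\phi^*\circ\cL_g=\cL_f\circ\phi^*$, by the chain rule. Each new coordinate $y_i$ pulls back to the component $\phi_i\in\R[x_1,\dots,x_n]$, which by the previous paragraph lies in some finite-dimensional $\cL_f$-invariant space; the sum $U$ of these $n$ spaces is finite-dimensional, $\cL_f$-invariant, and contains $\phi_1,\dots,\phi_n$. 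Transporting back, $W=(\phi^*)^{-1}(U)$ is finite-dimensional, $\cL_g$-invariant, and contains $y_1,\dots,y_n$, so $g$ is super-linearizable by the criterion. A tame automorphism is in particular a polynomial automorphism, so the claim follows; note the argument never actually uses tameness, and indeed a finite composition of such transformations is handled automatically.

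I expect the main obstacle to be the equivalence in the first step rather than the algebra: one must verify that the projection-only condition of Definition~\ref{defn:lifted_super_linearizable} genuinely produces an $\cL_f$-invariant space, and not merely the weaker fact that $\cL_f x_i\in V_0$. The device of reading the iterated Lie derivatives $\cL_f^m x_i$ off the $t$-Taylor coefficients of the linear flow $e^{At}\psi(x)$, all of which remain trapped in the fixed finite-dimensional space $V_0$, is what closes this gap; the subalgebra/derivation step and the naturality of $\cL$ under change of variables are then routine.
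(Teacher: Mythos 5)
Your proposal is correct, and it takes a genuinely different route from the paper. The paper proves Theorem~\ref{thm:tame} by exploiting the generator structure of tame automorphisms: it treats affine maps via the explicit conjugation identity $\cL_{\tilde f}^k\tilde f = P\,\cL_f^k f$ (plus a translation lemma), and elementary maps via a hands-on construction --- lift $f$ by the extra component $f_n+\sum_i\frac{\partial g}{\partial x_i}f_i$, invoke Lemma~\ref{lemma:2} (whose proof itself relies on the WDG sufficient condition of Theorem~\ref{thm:wdg}) to see the lift is super-linearizable, then project back down with Lemma~\ref{lemma:1}; stably tame maps then need the separate argument of Corollary~\ref{cor:stablytame}. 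You instead recast the span criterion~\eqref{eq:spansuper} as ``the coordinate functions lie in a finite-dimensional $\cL_f$-invariant subspace of $\R[x_1,\dots,x_n]$,'' observe via the Leibniz rule that the set of polynomials admitting such a subspace is a subalgebra (the product space $\Span\{ab : a\in V_h,\ b\in V_k\}$ argument is exactly right), conclude it is all of $\R[x_1,\dots,x_n]$, and transport along the pullback isomorphism $\phi^*$, which intertwines $\cL_f$ and $\cL_g$ by the chain rule. Your extraction of the invariant subspace from the flow-based Definition~\ref{defn:lifted_super_linearizable} --- matching $t$-Taylor coefficients of $e^{tf}x_0$ and $e^{At}z_0$ at $t=0$, legitimate since both sides are analytic in $t$, to get $\cL_f^m x_i=[A^m\psi]_i\in V_0$ --- correctly closes the one genuinely delicate gap, and the converse lift construction (matrix of $\cL_f|_V$ in a basis extending the coordinates) is standard. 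Notably, your argument never uses tameness: it proves closure under \emph{all} polynomial automorphisms in one stroke, which subsumes both Theorem~\ref{thm:tame} and Corollary~\ref{cor:stablytame} and is strictly stronger than what the paper establishes, since it would also cover wild, non-stably-tame automorphisms should any exist (an open question the paper's method cannot reach, tied as it is to the affine/elementary decomposition). What the paper's constructive route buys in exchange is explicit control: concrete observables and lift dimensions at each step, which feed directly into its second theme of stabilizing observables for the WDG condition; your route is shorter, coordinate-free, and independent of both the tame/wild classification and the WDG machinery.
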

As a corollary, we will show the following
\begin{corollary}\label{cor:stablytame}
The class of polynomial super-linearizable systems is closed under transformation by stably tame automorphisms.
\end{corollary}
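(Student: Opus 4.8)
The plan is to deduce Corollary~\ref{cor:stablytame} directly from Theorem~\ref{thm:tame} by unwinding the definition of a stably tame automorphism and showing that the operations involved---lifting the state with stabilizing variables, applying a tame automorphism, and projecting back---each preserve super-linearizability. So let $f \in \cS_n$ be super-linearizable and let $\psi:\R^n \to \R^n$ be stably tame, so that by Definition~\ref{defn:6} there is an $m>0$, a tame automorphism $\phi:\R^{n+m}\to\R^{n+m}$, and a polynomial map $y:\R^n\to\R^m$ with $\Pi_n(\phi(x,y(x)))=\psi(x)$ for all $x$. I want to conclude that the pushed-forward vector field $\psi_* f$ on $\R^n$ is again super-linearizable.

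The key idea is to manufacture, from $f$, a super-linearizable vector field $F$ on the larger space $\R^{n+m}$ whose first $n$ coordinates reproduce the dynamics of $f$ and whose remaining coordinates carry the stabilizing variables $y$ as additional observables. Concretely, first I would show that super-linearizability is preserved under appending observables: since $f$ is super-linearizable via some lift $\tilde f = Az$ by an observable vector $p$, and since $y(x)$ is polynomial, the augmented map $\hat p = [p, (\text{observables needed to close } y)]$ still produces a linear lift of the vector field $F(x,w) = [f(x), (\cL_f y)(x)]^\top$ on $\R^{n+m}$, where the $w$-coordinates track $y(x)$ along trajectories. The point is that $\cL_f y$ is again polynomial, so closing it off requires only finitely many further observables, all of which are themselves driven polynomially by the flow of $f$; hence $F \in \cS_{n+m}$. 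By construction the trajectories of $F$ initialized at $(x_0, y(x_0))$ project under $\Pi_n$ to the trajectories of $f$, and their first $n+m$ coordinates equal $(e^{tf}x_0,\, y(e^{tf}x_0))$.

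Next I would apply Theorem~\ref{thm:tame} to $F$ and the tame automorphism $\phi$ on $\R^{n+m}$: this yields that $\phi_* F \in \cS_{n+m}$ is super-linearizable. The final step is to observe that the $\Pi_n$-projection of the $\phi_* F$-trajectory through $\phi(x_0,y(x_0))$ is exactly the $\psi_* f$-trajectory through $\psi(x_0)$; this is the infinitesimal/trajectory-level restatement of the identity $\Pi_n\circ\phi\circ(\mathrm{id},y) = \psi$ together with the fact that the $(\text{id},y)$-lift is flow-invariant for $F$. Since composing a super-linearization of $\phi_* F$ on $\R^{(n+m)+k}$ with this coordinate projection again gives a linear lift that projects onto the $\psi_* f$ dynamics on the first $n$ coordinates, we conclude $\psi_* f \in \cS_n$.

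The main obstacle I anticipate is the bookkeeping at the first step: one must verify carefully that the trajectory through $(x_0,y(x_0))$ is genuinely flow-invariant in the sense that its $w$-coordinates remain equal to $y$ evaluated along the $x$-flow, which is what guarantees that after applying $\phi$ and projecting we land on the $\psi$-transformed trajectory rather than some unrelated curve in $\R^{n+m}$. This requires matching the initialization of the stabilizing variables in Definition~\ref{defn:6} with the observable initialization $z_0=[x_0,p(x_0)]^\top$ of Definition~\ref{defn:lifted_super_linearizable}, and checking that the linear lift of $F$ does not disturb the algebraic relation $w = y(x)$ along the flow. Once this consistency is established, the rest is a routine application of Theorem~\ref{thm:tame} and composition of projections, since super-linearizability is manifestly stable under the canonical projection $\Pi$ by its very definition.
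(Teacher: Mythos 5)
Your proposal is correct and follows essentially the same route as the paper's proof: lift $f$ by the stabilizing variables so that the appended coordinates track $y(x)$ along the flow (this is exactly Lemma~\ref{lemma:2}, which you may invoke rather than re-derive, since your sketched justification of the closure of $\cL_f y$ is the one loose point), apply Theorem~\ref{thm:tame} to the tame automorphism $\phi$ on $\R^{n+m}$, and project back via Lemma~\ref{lemma:1}. The bookkeeping you flag---that $w=y(x)$ is flow-invariant and that the initialization $\phi(x_0,y(x_0))=[y_0,\,y(\psi^{-1}(y_0))]^\top$ is polynomial in $y_0$ because $\psi^{-1}$ is polynomial---is precisely how the paper closes the argument.
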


\subsection{The WDG condition: a sufficient condition for super-linearization}\label{ssec:wdg}
As already mentioned, exhibiting a checkable condition for super-linearization, even in the case of polynomial vector fields, is an open problem.  It is easy to see, see, e.g.~\cite{levine1986nonlinear,coomes1991linearization}, that a vector field $f:\R^n \to \R^n$ is super-linearizable {\em if and only if}  \begin{equation}\label{eq:spansuper}
\dim_\R \operatorname{span}\{f,\cL_f f, \cL_f^2 f,\cL_f^2 f,\ldots\}< \infty
\end{equation}
is finite. Note that $(\cL_f f)(x(t))=\frac{d}{dt} f(x(t))$ is the {\em total time derivative} of $f$. Hence, we can paraphrase the condition as saying that a system is super-linearizable if the iterated total-time derivatives of $f$ span a finite-dimensional vector space. Since there are no known upper bound on the number of derivatives one needs to evaluate to guarantee the span is finite-dimensional, this condition does not provide a test to decide in finite time whether $f$ is super-linearizable. 

In~\cite{belabbas2023sufficient}, the authors provide a sufficient condition for super-linearizability that throughout the paper, we will refer to this sufficient condition as the {\em WDG condition}, which stands for {\em weighted dependency graph condition}. To state it, we introduce the weighted dependency graph: given a system $\dot x = f(x)$ in $\R^n$, we associate to it a directed graph on $n$ nodes and with (weighted) edge set
$$E = \{v_iv_j \mid (D_x(f))_{ij} \neq 0\}.$$ The edge weight is then $\gamma_{v_iv_j}=(D_x(f))_{ij}$. Note that $i=j$ is permitted, i.e., self-loops are allowed.
The result is then
\begin{theorem}[WDG condition]\label{thm:wdg}
The system $\dot x = f(x)$ is super-linearizable if the product of the edge weights along all cycles of its WDG is a constant.
\end{theorem}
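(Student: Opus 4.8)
The plan is to work with the characterization \eqref{eq:spansuper} and to translate the cycle hypothesis into a rigid structural normal form for $f$. First I would record the elementary reductions. Writing $\cL_f=\sum_k f_k\partial_{x_k}$ for the total-derivative derivation, the $i$-th component of $\cL_f^k f$ is the scalar polynomial $\cL_f^k f_i$, so by \eqref{eq:spansuper} the system is super-linearizable if and only if each scalar family $\{\cL_f^k f_i : k\ge 0\}$ spans a finite-dimensional subspace of $\R[x_1,\dots,x_n]$; call a scalar polynomial $p$ \emph{finitary} when $\{\cL_f^k p\}$ has finite-dimensional span. A one-line Leibniz computation, $\cL_f^k(pq)\in\Span\{(\cL_f^a p)(\cL_f^b q)\}$, shows that the finitary polynomials form an $\R$-subalgebra $\cF\subseteq\R[x_1,\dots,x_n]$ that is moreover closed under $\cL_f$. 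Since $\cL_f x_j=f_j$, it then suffices to prove that every coordinate function $x_j$ lies in $\cF$.

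The key step---and the only place the hypothesis is used---is the observation that the cycle condition forces the diagonal blocks of the Jacobian to be constant. Each edge weight $\gamma_{v_iv_j}=(D_xf)_{ij}$ is a \emph{nonzero} polynomial, and along any directed cycle the degrees of the weights add; since the product is a nonzero constant (degree $0$), every weight on a cycle must itself have degree $0$. As every edge inside a strongly connected component of the WDG lies on a cycle (and a self-loop is a $1$-cycle), I conclude that $\partial f_i/\partial x_j$ is a constant whenever $i$ and $j$ lie in the same strongly connected component. Passing to the condensation DAG and relabelling so that its components $S_1,\dots,S_r$ are in topological order, this says exactly that for $i\in S_a$ one has $f_i=\sum_{i'\in S_a}c_{ii'}x_{i'}+h_i$, where the $c_{ii'}$ are constants and $h_i$ is a polynomial in the variables of the strictly later components $S_{a+1},\dots,S_r$ only. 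In other words $f$ is block-triangular with constant-coefficient affine diagonal blocks.

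From here the argument is a downward induction on $a$ (a cascade), using that $\cF$ is an algebra. For the base case $a=r$ the subsystem on $S_r$ is affine-linear, so its coordinates are finitary. For the inductive step, assume all variables of $S_{a+1}\cup\dots\cup S_r$ are finitary; then each $h_i$ above is a polynomial in finitary variables and hence $h_i\in\cF$. Collecting the block into a vector $X=(x_j)_{j\in S_a}$ with constant coefficient matrix $C$, one has $\cL_f X=CX+H$ with $H=(h_i)\in\cF^{|S_a|}$, whence $\cL_f^k X=C^kX+\sum_{m=0}^{k-1}C^{\,k-1-m}\cL_f^m H$. Every entry of the second sum lies in the fixed finite-dimensional space $\Span\{\cL_f^m h_i : m\ge 0,\ i\in S_a\}$, and every entry of $C^kX$ lies in $\Span\{x_{i'} : i'\in S_a\}$, so each $\{\cL_f^k x_j\}$ has finite-dimensional span and the $S_a$-variables are finitary. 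Since $\cF$ is an algebra this promotes all polynomials in the already-processed variables to $\cF$, and the induction closes.

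I expect the main obstacle to be the structural observation in the second paragraph: recognizing that ``constant product around every cycle'' is equivalent, via degree additivity of polynomial products, to ``constant Jacobian entries within each strongly connected component,'' which is what unlocks the block-triangular normal form. Once that normal form is available the cascade induction is routine, its only subtlety being the bookkeeping that keeps each iterate $\cL_f^k x_j$ inside a single finite-dimensional space; correctness there rests on the algebra and $\cL_f$-closure properties of $\cF$ established in the first paragraph.
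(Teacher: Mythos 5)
Your proof is correct, but there is no internal proof to compare it against: the paper states Theorem~\ref{thm:wdg} as a result imported from \cite{belabbas2023sufficient} and never proves it. Judged on its own merits, your argument is sound and self-contained relative to the characterization \eqref{eq:spansuper}, which the paper grants with citations. The pivotal step holds: since the units of $\R[x_1,\ldots,x_n]$ are the nonzero constants and degrees add under multiplication in an integral domain, a product of nonzero edge weights can be constant only if every factor has degree zero; combined with the fact that every edge inside a strongly connected component (including self-loops, as $1$-cycles) lies on a simple directed cycle, this correctly yields constant Jacobian entries within each component and the block-triangular normal form with constant diagonal blocks. The cascade induction is then watertight: the algebra $\cF$ of ``finitary'' polynomials is closed under products (Leibniz) and under $\cL_f$, and the identity $\cL_f^k X = C^k X + \sum_{m=0}^{k-1} C^{k-1-m}\cL_f^m H$ confines each iterate to a fixed finite-dimensional space. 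It is worth noting that your structural insight is exactly the mechanism the paper itself uses when it \emph{applies} Theorem~\ref{thm:wdg}: in the proof of Lemma~\ref{lemma:2}, and again in item (2) of Proposition~\ref{prop:1}, the authors argue that nonconstant weights occur only on edges into a node with no outgoing edges, hence on no cycle; your SCC decomposition and condensation-DAG cascade is the systematic generalization of that ad hoc argument. Two cosmetic points: your claim that $h_i$ involves only ``strictly later'' components tacitly fixes an orientation convention for the topological order of the condensation (harmless, but worth stating explicitly, since the paper's edge convention $\gamma_{v_iv_j}=(D_xf)_{ij}$ is itself used loosely), and your equivalence between the vector-valued span condition \eqref{eq:spansuper} and componentwise finite-dimensionality deserves the one-line justification that the vector span embeds in the product of the componentwise spans.
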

 Consider the following system, for example:
\begin{equation}\label{example:2}
\left\{
\begin{aligned}
    \dot x_1 &= -x_1+x_3 \\
    \dot x_2 &= 2x_1+x_3\\
    \dot x_3 &=2x_2\\
    \dot x_4 &= x_1^2+x_3^2
\end{aligned}
\right.
\end{equation}
The weighted graph of the above system is shown in Figure~\ref{fig:WDG example}. The cycles in the graph are colored green, and it can be easily seen that the edge weights in all the cycles are constant, and hence so is their product along said cycles. We conclude that the above system satisfies the WDG condition and is super-linearizable.

\begin{figure}
    \centering
    \begin{tikzpicture}[scale=.7,->, >=latex, node distance=1.2cm, thick]

    % Nodes
    \node[fill=black, circle, inner sep=1.5pt, label=below:$x_1$] (x1) at (0,0) {};
    \node[fill=black, circle, inner sep=1.5pt, label=above:$x_2$] (x2) at (2,2) {};
    \node[fill=black, circle, inner sep=1.5pt, label=below:$x_3$] (x3) at (4,0) {};
    \node[fill=black, circle, inner sep=1.5pt, label=below:$x_4$] (x4) at (2,-2) {};

    % Edges with blue color
    \path[Green] (x1) edge [loop left,min distance=0.6 cm] node[above] {-1} (x1);
    \path[Green] (x1) edge node[above left] {2} (x2);
    \path[blue] (x1) edge node[below] {2$x_1$} (x4);
    \path[Green] (x2) edge [bend left=20]  node[above] {2} (x3);
    \path[Green] (x3) edge [bend left=20] node[left] {1} (x2);
    \path[blue] (x3) edge node[below,xshift=.2cm] {2$x_3$} (x4);
    \path[Green] (x3) edge node[above] {1} (x1);

\end{tikzpicture}
    \caption{A weighted dependency graph of a system that follows the sufficiency condition}
    \label{fig:WDG example}
\end{figure}
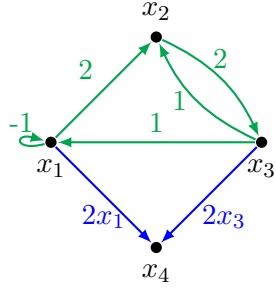

\section{Proof of the main results}
We start with some preliminary Lemmas that will be used repeatedly in the proof of the main results.

The first result states that if a polynomial system is the projection of a higher-dimensional super-linearizable system, then it is also super-linearizable. Precisely, we have

\begin{lemma}\label{lemma:1}
    Let $h \in {\mathcal S}_m$ and $f \in {\mathcal P}_n$ for $0 < n <m$. If it holds that for some polynomial $p$  
    $$
    \Pi_n (e^{th}[x_0;p(x_0)]^\top) = e^{tf} x_0, \mbox{ for all } t \geq 0, x_0 \in \R^n,
    $$
    then $f \in {\mathcal S}_n$.
\end{lemma}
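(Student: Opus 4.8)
The plan is to exploit the transitivity of super-linearizability through canonical projections. Since the hypothesis already exhibits $f$ as a projection of the trajectories of $h$, and $h \in {\mathcal S}_m$ is itself a projection of a \emph{linear} flow, composing the two projections should present $f$ as a projection of that same linear flow — which is precisely what super-linearizability of $f$ requires. The new observable for $f$ will be built by stacking $p$ together with the observable that linearizes $h$, evaluated along $p$.

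First I would unpack the hypothesis $h \in {\mathcal S}_m$ using Definition~\ref{defn:lifted_super_linearizable}: there exist an integer $k>0$, a matrix $A \in \R^{(m+k)\times(m+k)}$, and a polynomial observable $q:\R^m \to \R^k$ such that $e^{th} y_0 = \Pi_m(e^{At}[y_0; q(y_0)]^\top)$ for every $y_0 \in \R^m$ and $t \geq 0$. I would then substitute the specific initial condition $y_0 = [x_0; p(x_0)]^\top \in \R^m$ and apply $\Pi_n:\R^m \to \R^n$ to both sides. On the left, the lemma's hypothesis gives $\Pi_n(e^{th}[x_0;p(x_0)]^\top) = e^{tf} x_0$. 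On the right, the key observation is that the iterated projection $\Pi_n \circ \Pi_m$ coincides with the single canonical projection of $\R^{m+k}$ onto its first $n$ coordinates, so the right-hand side becomes $\Pi_n(e^{At}[x_0; P(x_0)]^\top)$ with $P(x_0) := [p(x_0);\, q(x_0, p(x_0))]^\top$.

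The final step is to check that $P:\R^n \to \R^{(m-n)+k}$ is a genuine polynomial map — being the concatenation of $p$ with the composition of $q$ and $x \mapsto [x;p(x)]$, which is polynomial since polynomials are closed under composition and concatenation — so that the matrix $A$ and the observable $P$ realize exactly the linear lift of $f$ demanded by Definition~\ref{defn:lifted_super_linearizable}, giving $f \in {\mathcal S}_n$. I do not expect a serious obstacle here; the argument is essentially bookkeeping. The only points requiring care are the identification of $\Pi_n \circ \Pi_m$ with a single canonical projection and the verification that $P$ is polynomial, while the dimension count is consistent because $m > n$ forces the extra lift dimension $(m-n)+k$ to be strictly positive.
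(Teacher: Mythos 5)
Your proposal is correct and follows essentially the same route as the paper's proof: unpack the super-linearizability of $h$ to get the linear lift $(A,q)$, plug in $y_0=[x_0;p(x_0)]^\top$, and collapse the two projections via $\Pi_n\circ\Pi_m=\Pi_n$. Your explicit check that the composite observable $P(x_0)=[p(x_0);\,q(x_0,p(x_0))]^\top$ is polynomial is a detail the paper leaves implicit, but it is the same argument.
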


\begin{proof}
    Since $\dot y = h(y)$ is super-linearizable, there exists $k >0$, $A \in \R^{(m+k) \times (m+k)}$, and a polynomial function $q:\R^m \rightarrow \R^k$ so that 
    %the trajectories $z(t)$ of $\dot z = Az$, $z(0)=[y_0, p(y_0)]$ obey
    
    $$\Pi_m(e^{At}z_0) = e^{th}y_0 \mbox{ for all } t \geq 0,$$ with $z(0)=[y_0, q(y_0)]$.
    
    From the assumption of the Lemma  and the above relation, we have   
    $$
    e^{tf}x_0=\Pi_n(e^{th}[x_0;p(x_0)]^\top) =  \Pi_n \Pi_m(e^{At}z_0)$$ for $z_0=[y_0,q(y_0)]^\top$ and $y_0= [x_0;p(x_0)]^\top$. But since $\Pi_n \circ \Pi_m=\Pi_n$ for any $0 < n \leq m$,  the above relation yields
     $$\Pi_n(e^{At}z_0) = e^{tf}x_0 \mbox{ for all } t \geq 0.$$ This shows that $\dot x = f(x)$ is super-linearizable and concludes the proof.
       % $ \implies \Pi_n(z(t)) = x(t)$. This implies that $\dot x = f(x)$ is super-linearizable and admits the same super-linearization as $\dot y = h(y)$.
    \end{proof}
The above Lemma can be paraphrased by saying that if the diagram below commute for some super-linearizable $h$, then $f$ is super-linearizable.
\[
\begin{tikzcd}
\R^m \arrow[r, "e^{th}"] \arrow[d, "\Pi_n"'] & \R^m \arrow[d, "\Pi_n"] \\
\R^n \arrow[r, "e^{tf}"'] & \R^n
\end{tikzcd}
\]

The following Lemma provides in a sense a reciprocal statement, allowing us to conclude that a higher-dimensional system is super-linearizable system from the knowledge that a lower-dimensional system is super-linearizable. 

\begin{lemma}\label{lemma:2}
    Let $f\in {\mathcal S}_n$ and $g:\R^n \rightarrow \R^k$ be a polynomial, then the system $\dot y = \tilde f := [f,g]^\top$ is super-linearizable.% \in {\mathcal S}_{n+1}$  
\end{lemma}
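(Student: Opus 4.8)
The plan is to argue entirely through the span criterion \eqref{eq:spansuper}. Since $f \in \cS_n$ we already know $\dim_\R \Span\{\cL_f^j f : j \ge 0\} < \infty$, so to conclude $\tilde f \in \cS_{n+k}$ it suffices to show that $\Span\{\cL_{\tilde f}^j \tilde f : j \ge 0\}$ is finite-dimensional. Writing $y = [x;w]^\top \in \R^{n+k}$ with $x \in \R^n$ and $w \in \R^k$, the first step is the elementary observation that both blocks of $\tilde f = [f;g]^\top$ depend only on $x$, and that the $x$-block of $\tilde f$ is exactly $f$; hence the Jacobian block $\partial \tilde f/\partial w$ vanishes, and $\cL_{\tilde f}$ acts on any function of $x$ alone as the $n$-dimensional operator $\cL_f$. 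By induction on $j$ (using the chain rule / derivation property at each step) this yields the blockwise identity $\cL_{\tilde f}^j \tilde f = [\cL_f^j f;\, \cL_f^j g]^\top$ for all $j \ge 0$.

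Consequently $\Span\{\cL_{\tilde f}^j \tilde f\}$ embeds into the direct sum $\Span\{\cL_f^j f\} \times \Span\{\cL_f^j g\}$, so its dimension is at most $\dim\Span\{\cL_f^j f\} + \dim\Span\{\cL_f^j g\}$. The first summand is finite by hypothesis, and since $\cL_f^j g = [\cL_f^j g_1,\ldots,\cL_f^j g_k]^\top$ it is enough to bound $\dim_\R \Span\{\cL_f^j g_m : j \ge 0\}$ for each scalar component $g_m$. Thus the entire statement reduces to the following key claim: \emph{if $f$ is super-linearizable, then every scalar polynomial $\phi$ in $x_1,\ldots,x_n$ generates a finite-dimensional $\cL_f$-invariant space}, i.e.\ $\dim_\R \Span\{\cL_f^j \phi : j \ge 0\} < \infty$. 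This is the step I expect to be the main obstacle, and it is where the super-linearizability of $f$ is genuinely used; everything else is formal.

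To prove the claim I would exploit that $\cL_f$ is a derivation on the polynomial algebra. From \eqref{eq:spansuper} applied to $f$, the space $V := \Span\{1\} + \sum_{i} \Span\{\cL_f^j x_i : j \ge 0\}$ is finite-dimensional (each summand is finite because $\cL_f^{j+1} x_i = (\cL_f^j f)_i$), contains the constant $1$ and every coordinate function $x_1,\ldots,x_n$, and satisfies $\cL_f V \subseteq V$. For $d \ge 1$ let $W_d$ be the linear span of all products of at most $d$ elements of $V$; this is finite-dimensional since $V$ is. The Leibniz rule $\cL_f(v_1\cdots v_r) = \sum_{m} v_1\cdots(\cL_f v_m)\cdots v_r$ together with $\cL_f V \subseteq V$ shows $\cL_f W_d \subseteq W_d$. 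Since every monomial of degree at most $d$ in $x_1,\ldots,x_n$ is a product of at most $d$ elements of $V$ (using $1,x_i \in V$), any polynomial $\phi$ of degree $d$ lies in $W_d$, whence $\Span\{\cL_f^j \phi : j \ge 0\} \subseteq W_d$ is finite-dimensional. Applying this with $\phi = g_m$ for each $m$ and feeding the result back into the reduction of the previous paragraph completes the proof.

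For completeness I would note that the same conclusion can be obtained constructively, giving an alternative route via Lemma~\ref{lemma:1}: take a linear lift $z = [x;p(x)]^\top$, $\dot z = Az$, of $f$; the monomials in the components of $z$ of degree up to $\max_m \deg g_m$ are carried into one another by the induced degree-preserving linear dynamics $\dot\mu = B\mu$, each $g_m$ is a fixed linear functional of $\mu$, and appending the linear equations $\dot w = C\mu$ produces a linear system whose projection recovers $\dot y = \tilde f$. I nonetheless prefer the span-criterion argument above, as it sidesteps the tensor-power bookkeeping and isolates the one nontrivial point, namely that super-linearizability propagates from the coordinate functions to all polynomials in them.
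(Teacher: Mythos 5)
Your proof is correct, but it takes a genuinely different route from the paper's. The paper argues constructively: it reduces to $k=1$, replaces $x$ by a linear lift $\dot z = Az$ of $f$, appends the scalar $w=g$ with $\dot w = \sum_{i=1}^n \frac{\partial g}{\partial z_i} f_i(z_1,\ldots,z_n)$, observes that in the weighted dependency graph of the augmented system the node $w$ has only incoming (possibly non-constant) edges and no outgoing ones, so it lies on no cycle and the WDG sufficient condition of Theorem~\ref{thm:wdg} applies, and finally reorders coordinates and invokes Lemma~\ref{lemma:1}. You instead work purely with the span criterion \eqref{eq:spansuper}: after the (correct) blockwise identity $\cL_{\tilde f}^j \tilde f = [\cL_f^j f;\,\cL_f^j g]^\top$, the heart of your argument is the algebraic fact that the functions with finite-dimensional $\cL_f$-orbit form a subalgebra containing $1$ and the coordinate functions $x_i$ (your finite-dimensional space $V$, the filtration $W_d$, and the Leibniz rule), so every polynomial observable inherits a finite-dimensional orbit; this step is sound, since $\cL_f^{j+1}x_i = (\cL_f^j f)_i$ ties $V$ to the hypothesis on $f$. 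Your route buys generality and independence from the WDG machinery: it handles all $k$ at once rather than iterating the $k=1$ case, needs only the classical criterion rather than the sufficient condition of Theorem~\ref{thm:wdg}, and isolates a reusable fact (super-linearizability propagates from the coordinates to the entire polynomial algebra). What it gives up is explicitness: the paper's proof exhibits the lifted system concretely, and that explicit structure is reused later --- the proof of Proposition~\ref{prop:1} verifies membership in $\cW_{n+1}$ by ``arguments similar to the ones used in the proof of Lemma~\ref{lemma:2}'', a step your non-constructive span argument would not directly supply. Your closing remark sketching the constructive Carleman-style alternative shows you recognized this trade-off; either version is a complete and valid proof of the Lemma.
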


\begin{proof}
    It suffices to prove the result for $k=1$; indeed, if that statement holds, then $\tilde f$ is super-linearizable and thus satisfies the requirement of the Lemma, which can then be applied iteratively to prove the general case. 
    
    We thus assume that $k=1$. Since $f$ is super-linearizable, there exists $ m>0$,  $A \in \R^{(n+m)\times(n+m)}$ and a polynomial function $p:\R^n \to \R^m$ such that $\Pi_n(z(t)) = x(t)$ for $\dot z =  Az $ with $z(0) = [x_0,p(x_0)]^\top$. 

    Now set $w=g(x)$ and compute the total time derivative of $w$. Since $x(t)=\Pi_n(z(t))$, we have 
     $$\dot w=\sum_{i=1}^n\dfrac{\partial g(z_1,\ldots,z_n)}{\partial z_i}f_i(z_1,\ldots,z_n) =:h(z_1,\ldots,z_n)$$

Now set  $\tilde z := [z,w]^\top$. It obeys the (nonlinear) dynamics
        \begin{equation}\label{eqn:3}
        \frac{d}{dt} {\tilde{z}} = [\dot z, \dot w]^\top = [Az,h(z_1,\ldots,z_n)]^\top=:\tilde F(\tilde z).
    \end{equation}
    We claim that system~\eqref{eqn:3} is super-linearizable. To see that the claim holds, we prove that it follows the WDG sufficient condition of Theorem~\ref{thm:wdg}. Proceeding by contradiction, assume that there exists a cycle in the WDG with a non-constant weight on an edge. Since the only node with nonlinear edge weights is $w$ (or $\tilde z_{n+1}$), the cycle must pass through that node. However, there are no outgoing edges from the node in the WDG corresponding to the variable $w$, since $\frac{\partial \tilde F}{\partial w}=0$. Hence, no so such  cycle exist, which proves the claim.

    Now, reorder the entries of $\tilde z$ as  $u := [z_1, \ldots, z_n, w, z_{n+1}, \ldots, z_{n+k}]$ and reorder similarly the entries of $\tilde F$ in $ F$ so that $\dot u = F(u)$. Then, it holds that
    $$\Pi_{n+1} e^{t F} u_0 = e^{t\tilde f}\Pi_{n+1} u_0 \mbox{ for all } t \geq 0.
    $$ and $y_0 = [x_0,g(x_0)]^\top,u_0 = [x_0,g(x_0),p(x_0)]^\top $
    Using Lemma~\ref{lemma:1}, $\dot {y} = \tilde f(y)$  is also super-linearizable. In other words, $\tilde f = [f,g]^\top \in {\mathcal S}_{n+1}$. This proves the case $k=1$.  
\end{proof}

\subsection{Proof of Theorem~\ref{thm:tame}}
Recall that a tame automorphism can be expressed as a composition of affine and elementary automorphisms. So, to prove Theorem~\ref{thm:tame}, it is sufficient to show that super-linearization is preserved both by affine transformation and by elementary transformations. This is the approach we take to prove the result. We start by showing that super-linearization is an invariant of affine transformation.

\begin{proposition}\label{prop:affine}
    Super-linearization is preserved under affine automorphisms.
\end{proposition}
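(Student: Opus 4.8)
The plan is to work with the intrinsic characterization \eqref{eq:spansuper}: a field is super-linearizable precisely when the iterated total time derivatives $\{f,\cL_f f,\cL_f^2 f,\ldots\}$ span a finite-dimensional space of vector fields. Writing the affine automorphism as $\phi(x)=Ax+b$ with $A$ invertible, the transformed system in the new coordinate $y=\phi(x)$ is $\dot y = g(y)$ with $g(y)=A f(A^{-1}(y-b))$, obtained from $\dot y = A\dot x = Af(x)$ and $x=A^{-1}(y-b)$. Given $f\in\cS_n$, I would show that $\dim_\R\Span\{g,\cL_g g,\cL_g^2 g,\ldots\}$ equals $\dim_\R\Span\{f,\cL_f f,\cL_f^2 f,\ldots\}$, which is finite by hypothesis; by the equivalence in \eqref{eq:spansuper}, this gives $g\in\cS_n$.

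The key step is an identity relating the total time derivatives of $g$ to those of $f$ under the change of variables. Setting $x=A^{-1}(y-b)$, I claim that for every $k\geq 0$,
\[
(\cL_g^k g)(y) = A\,(\cL_f^k f)\big(A^{-1}(y-b)\big).
\]
The base case $k=0$ is the definition of $g$. For the inductive step, the chain rule gives $D_y\big[A(\cL_f^k f)(A^{-1}(y-b))\big]=A\,\big(D_x\cL_f^k f\big)\,A^{-1}$, and multiplying on the right by $g(y)=Af(x)$ collapses the factor $A^{-1}A$, yielding $A\,\big(D_x\cL_f^k f\big)\,f(x)=A\,(\cL_f^{k+1}f)(x)$, which is exactly $(\cL_g^{k+1}g)(y)$.

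This identity shows that every element of $\Span\{g,\cL_g g,\ldots\}$ has the form $y\mapsto A\,v(A^{-1}(y-b))$ for some $v\in\Span\{f,\cL_f f,\ldots\}$, and conversely. The assignment $v\mapsto\big(y\mapsto A\,v(A^{-1}(y-b))\big)$ is a linear map between spaces of vector-valued functions; since $A$ is invertible and $\phi$ is a bijection of $\R^n$, it is injective, hence a linear isomorphism onto its image. Isomorphisms preserve dimension, so the two spans have equal (finite) dimension and $g\in\cS_n$. The point to be careful about is the inhomogeneous term $b$: it does not affect the dimension count because the pullback by the affine bijection remains a dimension-preserving linear map on function space, with the translation simply absorbed into the argument $A^{-1}(y-b)$. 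As a more constructive alternative matching the lift-based definition, one can take a linear lift $\dot z=Mz$ of $f$, augment it with a constant coordinate via $\dot{\tilde z}=\operatorname{diag}(M,0)\tilde z$ where $\tilde z=[z;1]$, and apply the invertible linear change of basis sending $[z_{1:n};z_{n+1:n+m};1]$ to $[A z_{1:n}+b;\,z_{n+1:n+m};\,1]$; this produces an explicit linear lift of $g$ whose observables are $p(A^{-1}(y-b))$ together with the constant $1$, after which Lemma~\ref{lemma:1} concludes. I expect the induction and the dimension-preservation argument to be the only substantive points, and both are routine once the displayed identity is established.
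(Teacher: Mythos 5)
Your proof is correct, and its core is the same as the paper's: the paper proves this proposition by factoring the affine map into a linear part and a translation (Lemmas~\ref{lemma:3} and~\ref{lem:invariancetranslation}), and its proof of Lemma~\ref{lemma:3} in the appendix is exactly your induction, establishing $\cL_{\tilde f}^k\tilde f = P\,\cL_f^k f$ and concluding via the span criterion~\eqref{eq:spansuper}. The difference is that you fold the translation $b$ into the same identity, observing that $b$ drops out of the chain rule since $D_y\bigl[A\,v(A^{-1}(y-b))\bigr]=A\,(D_x v)\,A^{-1}$; this makes the paper's separate translation lemma unnecessary and is arguably cleaner, at the cost of nothing. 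Your constructive alternative is a genuinely different (and nice) second route: it handles the translation at the level of the lift by adjoining the constant observable $1$ and conjugating by the invertible matrix $\begin{pmatrix} A & 0 & b\\ 0 & I & 0\\ 0 & 0 & 1\end{pmatrix}$, producing an explicit linear lift of $g$ with observables $p(A^{-1}(y-b))$ and $1$ --- in effect it reproves the paper's Lemma~\ref{lem:invariancetranslation} (whose sketch is precisely the shifted-observables idea $p(z+c)$) while exhibiting the lift explicitly; note that here you can conclude directly from Definition~\ref{defn:lifted_super_linearizable}, since the lift is already linear, so invoking Lemma~\ref{lemma:1} is harmless but not needed. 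The paper's factored presentation mirrors its overall strategy of decomposing tame automorphisms into building blocks; your unified identity buys brevity, and your explicit lift buys constructiveness.
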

The proof is a consequence of Lemmas~\ref{lemma:3} and~\ref{lem:invariancetranslation} below.

\begin{lemma}\label{lemma:3}
    If $\dot x = f(x)$ is super-linearizable, with $x \in \R^n$, and $P \in \R^{n \times n}$ is an invertible matrix, then $\dot z =Pf(P^{-1}z)$ is super-linearizable. 
\end{lemma}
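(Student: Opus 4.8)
The plan is to lift the linear change of coordinates $z = Px$ to the higher-dimensional linear system that witnesses the super-linearizability of $f$, exploiting the fact that conjugating a linear vector field by a \emph{block-diagonal} invertible matrix again produces a linear vector field. First I would unpack the hypothesis via Definition~\ref{defn:lifted_super_linearizable}: since $f \in \mathcal{S}_n$, there exist $k>0$, a matrix $A \in \R^{(n+k)\times(n+k)}$, and a polynomial $p:\R^n \to \R^k$ such that, writing $\xi_0 = [x_0;p(x_0)]^\top$, one has $\Pi_n(e^{At}\xi_0) = e^{tf}x_0$ for all $t \ge 0$ and all $x_0 \in \R^n$. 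Denoting the transformed field by $g(z) := Pf(P^{-1}z)$, its trajectories are simply $e^{tg}z_0 = P\,e^{tf}(P^{-1}z_0)$, since $z=Px$ is a genuine change of variables.

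The construction I would propose is the following. Set
\[
Q := \begin{bmatrix} P & 0 \\ 0 & I_k \end{bmatrix} \in \R^{(n+k)\times(n+k)},
\]
which is invertible because $P$ is, and take as candidate lift the linear matrix $B := QAQ^{-1}$ together with the polynomial observable $q(z) := p(P^{-1}z)$. I would then verify directly that the pair $(B,q)$ super-linearizes $\dot z = g(z)$. Fixing $z_0 \in \R^n$ and putting $x_0 := P^{-1}z_0$, the new initial lifted state factors as $[z_0;q(z_0)]^\top = [Px_0; p(x_0)]^\top = Q\,\xi_0$, so that
\[
e^{Bt}[z_0;q(z_0)]^\top = Q\,e^{At}\,Q^{-1}Q\,\xi_0 = Q\,e^{At}\xi_0 .
\]

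The one step deserving care is the interaction of the projection with $Q$. Because $Q$ is block diagonal with top-left block $P$ and does not mix the first $n$ coordinates with the $k$ observable coordinates, it satisfies the commutation relation $\Pi_n \circ Q = P \circ \Pi_n$. Applying this gives
\[
\Pi_n\!\left(e^{Bt}[z_0;q(z_0)]^\top\right) = P\,\Pi_n(e^{At}\xi_0) = P\,e^{tf}x_0 = e^{tg}z_0 ,
\]
which is exactly the defining identity for super-linearizability of $g$, completing the argument. The main (and essentially only) obstacle is this commutation identity: it is what forces the choice of $Q$ to be block diagonal with the identity on the observable block rather than an arbitrary conjugation of $A$, since a transformation that entangled the state and observable coordinates would spoil $\Pi_n \circ Q = P \circ \Pi_n$. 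Everything else is bookkeeping. (Alternatively, one could obtain the same conclusion by recognizing $(z(t),q(z(t)))$ as the projection of the linear trajectory $e^{Bt}[z_0;q(z_0)]^\top$ and invoking Lemma~\ref{lemma:1}, but the direct verification above is the cleanest route.)
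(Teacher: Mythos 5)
Your proof is correct, but it takes a genuinely different route from the paper's. The paper proves Lemma~\ref{lemma:3} in Appendix~\ref{subsec:linearoperatorproof} via the span criterion~\eqref{eq:spansuper}: writing $\tilde f(z) := Pf(P^{-1}z)$, it shows by induction on $k$ that $\cL_{\tilde f}^k \tilde f = P\,\cL_f^k f$, so that $\Span\{\tilde f, \cL_{\tilde f}\tilde f,\ldots\} = P\,\Span\{f,\cL_f f,\ldots\}$ and the two spans have the same (finite) dimension. You instead work directly with Definition~\ref{defn:lifted_super_linearizable}: you conjugate the witnessing lifted linear system by the block-diagonal matrix $Q = \diag(P, I_k)$, transport the observables by $q = p \circ P^{-1}$, and verify the defining trajectory identity through the commutation relation $\Pi_n \circ Q = P \circ \Pi_n$, which you correctly identify as the step that forces $Q$ to be block diagonal with the identity on the observable block. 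Each step of your argument checks out, including the factorization $[z_0;q(z_0)]^\top = Q[x_0;p(x_0)]^\top$ and the identity $e^{Bt} = Qe^{At}Q^{-1}$. Your route buys more than the paper's: it is fully constructive, exhibiting an explicit super-linearization $(B,q) = (QAQ^{-1},\, p\circ P^{-1})$ of the transformed field with the \emph{same} number $k$ of observables, whereas the span argument certifies finite-dimensionality without producing a lift or controlling its size. The paper's route, conversely, never touches the lift $A$ at all and instead makes visible the equivariance of the iterated total time derivatives under linear changes of variables, a structural fact that meshes with how the criterion~\eqref{eq:spansuper} is used elsewhere in the paper. Both arguments are complete and correct.
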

The proof for the Lemma above can be found in the appendix~\ref{subsec:linearoperatorproof}. 
\begin{lemma}\label{lem:invariancetranslation}
    If $\dot x = f(x)$ is super-linearizable, with $x \in \R^n$, $c \in \R^n$ and $z=x-c$, then  $\dot z =h(z)$ is super-linearizable. 
\end{lemma}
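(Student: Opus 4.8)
The plan is to reduce this translation to a \emph{linear} change of variables by homogenizing with one extra coordinate, after which Lemmas~\ref{lemma:1}--\ref{lemma:3} do all the work. First I would record that the transformed dynamics are explicit: since $z = x-c$ we have $\dot z = \dot x = f(x) = f(z+c)$, so $h(z) = f(z+c)$. The reason Lemma~\ref{lemma:3} does not apply directly is that $x \mapsto x-c$ is affine, not linear; the remedy is the standard observation that an affine map of $\R^n$ becomes the restriction of a \emph{linear} map of $\R^{n+1}$ once we append a coordinate frozen at the value $1$.

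Concretely, I would first lift $f$ to $\R^{n+1}$ by adjoining a static coordinate $s$ with $\dot s = 0$. Writing $F(x,s) := [f(x),\,0]^\top$, Lemma~\ref{lemma:2} (applied with the zero polynomial $g \equiv 0$ and $k=1$) shows that $F \in {\mathcal S}_{n+1}$. Next, let $P \in \R^{(n+1)\times(n+1)}$ be the invertible linear map $P(x,s) = (x-cs,\, s)$, whose inverse is $P^{-1}(z,s) = (z+cs,\, s)$. By Lemma~\ref{lemma:3}, the conjugated field $G(z,s) := P F(P^{-1}(z,s))$ is again super-linearizable, and a short computation gives $G(z,s) = [f(z+cs),\,0]^\top$, i.e.\ the dynamics $\dot z = f(z+cs)$, $\dot s = 0$ on $\R^{n+1}$.

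Finally I would descend back to $\R^n$. The hyperplane $\{s=1\}$ is invariant for $G$ because $\dot s = 0$, and on it the $z$-dynamics read $\dot z = f(z+c) = h(z)$. Thus, taking the observable $p(z) \equiv 1$ so that the lifted initial condition is $(z_0,1)$, the flow of $G$ satisfies $e^{tG}(z_0,1) = (e^{th}z_0,\, 1)$, whence $\Pi_n\, e^{tG}(z_0,1) = e^{th}z_0$ for all $t \ge 0$. Since $G \in {\mathcal S}_{n+1}$ and $h \in {\mathcal P}_n$, Lemma~\ref{lemma:1} then yields $h \in {\mathcal S}_n$, which is the desired conclusion; combined with Lemma~\ref{lemma:3} this also completes the proof of Proposition~\ref{prop:affine}.

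The only genuinely nontrivial idea is the homogenization step, realizing a translation of $\R^n$ as a linear automorphism of $\R^{n+1}$ restricted to the affine chart $\{s=1\}$; everything else is a bookkeeping verification that the three previously established lemmas compose correctly. I expect the most error-prone point to be fixing the sign and placement of $c$ in $P$ versus $P^{-1}$, so that restricting to $\{s=1\}$ produces $f(z+c)$ rather than $f(z-c)$. An alternative, even shorter, route would bypass the lemmas entirely by invoking the span criterion~\eqref{eq:spansuper}: one checks by induction that $(\cL_h^k h)(z) = (\cL_f^k f)(z+c)$, and since precomposition with the fixed translation $z \mapsto z+c$ is a linear isomorphism of polynomial vector fields, it preserves the dimension of the span, giving $\dim_\R \operatorname{span}\{\cL_h^k h\} = \dim_\R \operatorname{span}\{\cL_f^k f\} < \infty$.
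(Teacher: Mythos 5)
Your proof is correct, but it takes a genuinely different route from the paper's. The paper's own proof of Lemma~\ref{lem:invariancetranslation} is a one-line sketch: if $p(x)$ are observables super-linearizing $f$, then $p(z+c)$ are observables super-linearizing $h(z)=f(z+c)$ --- i.e., it directly transports the observables through the translation. You instead homogenize: adjoin a frozen coordinate $s$ with $\dot s=0$ (Lemma~\ref{lemma:2} with $g\equiv 0$), realize the translation as the linear automorphism $P(x,s)=(x-cs,\,s)$ of $\R^{n+1}$ (Lemma~\ref{lemma:3}), and descend along the invariant slice $\{s=1\}$ via Lemma~\ref{lemma:1}. Your computations check out: $G(z,s)=[f(z+cs),\,0]^\top$ is as claimed, the constant observable $p\equiv 1$ is a legitimate polynomial for Lemma~\ref{lemma:1}, and the signs in $P$ versus $P^{-1}$ are placed so that restriction to $\{s=1\}$ yields $f(z+c)$, as you flagged. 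What your detour buys is a rigor point the paper's sketch glosses over: transporting the observables as the paper does makes the lifted dynamics \emph{affine} rather than linear --- if $u=[x;p(x)]^\top$ satisfies $\dot u=Au$, then $\tilde u=[z;p(z+c)]^\top=u-[c;0]^\top$ satisfies $\dot{\tilde u}=A\tilde u+A[c;0]^\top$ --- so strictly one must still adjoin a constant observable to absorb the drift, and your frozen coordinate $s$ is precisely that constant observable built in from the start. The cost is length and reliance on Lemma~\ref{lemma:2}, which the paper's direct argument does not need. Your alternative argument via the span criterion~\eqref{eq:spansuper}, using the inductively verified identity $(\cL_h^k h)(z)=(\cL_f^k f)(z+c)$ and the fact that precomposition with a translation preserves the dimension of the span, is also valid and is arguably the shortest fully rigorous proof of the three.
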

    The proof is elementary and we only sketch it due to space constraints.
\begin{proof}
 Let $p(x)$ be observables super-linearizing $f(x)$, then $p(z+c)$ are observables super-linearizing $h$.
\end{proof}

\begin{proposition}\label{prop:elementary}
   Let $\dot x = f(x)$ with $f\in {\mathcal S}_n$ and  $\phi$ be an elementary automorphism.  Then, the system $\dot y = h(y)$ where $y = \phi(x)$ is super-linearizable, i.e., $h\in {\mathcal S}_n$.
\end{proposition}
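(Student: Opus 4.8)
The plan is to exhibit $h$ as the linear projection of an $(n+1)$-dimensional super-linearizable field built from $f$, and then invoke Lemma~\ref{lemma:1}. Write the elementary automorphism as $\phi(x) = (x_1,\ldots,x_{n-1},\,x_n + g(x_1,\ldots,x_{n-1}))$. The transformed field is $h(y) = D\phi_x\,f(x)$ with $x=\phi^{-1}(y)$, and a one-line computation shows that in $x$-coordinates its components are $h_i = f_i$ for $i<n$ and $h_n = f_n + \cL_f g$, where $\cL_f g = \sum_{j=1}^{n-1}\frac{\partial g}{\partial x_j}f_j$ is a polynomial. The appearance of $\cL_f g$ is precisely what makes the augmentation Lemma the right tool.

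First I would set $\tilde g := \cL_f g \in \mathcal{P}_n$ and apply Lemma~\ref{lemma:2} to $f$ and $\tilde g$, concluding that the augmented field $\bar f(x,w) := [f(x),\,\tilde g(x)]^\top$ lies in $\mathcal{S}_{n+1}$. The key observation is that along any trajectory of $\bar f$ whose extra coordinate is initialized at $w(0) = g(x_1(0),\ldots,x_{n-1}(0))$, one has $w(t) = g(x_1(t),\ldots,x_{n-1}(t))$ for all $t$, since both sides solve the same scalar equation $\dot{\phantom{w}} = \tilde g$ with the same initial value. Thus the extra coordinate $w$ faithfully tracks the quantity that $\phi$ adds to $x_n$.

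Next I would transport $\bar f$ by the invertible linear map $M(x,w) = (x_1,\ldots,x_{n-1},\,x_n+w,\,w)$, whose inverse is $(\tilde y_1,\ldots,\tilde y_{n-1},\,\tilde y_n-\tilde y_{n+1},\,\tilde y_{n+1})$. By Lemma~\ref{lemma:3}, the field $\tilde F := M\bar f(M^{-1}\cdot)$ again belongs to $\mathcal{S}_{n+1}$. On the trajectory above, the first $n$ coordinates of $M(x,w)$ are exactly $(x_1,\ldots,x_{n-1},\,x_n+g)=\phi(x)$, and a direct check shows that $\Pi_n \tilde F$ reproduces $h$: the first $n-1$ components give $f_i$ and the $n$-th gives $f_n+\tilde g = h_n$.

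Finally I would apply Lemma~\ref{lemma:1} with the observable $p(y) := g(y_1,\ldots,y_{n-1})$. This is legitimate precisely because $g$ depends only on $x_1,\ldots,x_{n-1}$, the coordinates that $\phi$ leaves fixed: for $x_0 = \phi^{-1}(y_0)$ we get $w(0) = g(x_0) = g(y_{0,1},\ldots,y_{0,n-1}) = p(y_0)$, so $[y_0;p(y_0)]^\top$ is the correct initial datum for $\tilde F$ and its $\Pi_n$-image is the trajectory of $h$. Lemma~\ref{lemma:1} then yields $h\in\mathcal{S}_n$. I expect the main obstacle to be the coordinate bookkeeping in the last two steps — verifying both that $\Pi_n \tilde F = h$ and that the lifted initial data matches $[y_0;p(y_0)]^\top$ — rather than any conceptual difficulty; everything hinges on $g$ being a function of the untouched variables only, which is exactly what the definition of an elementary transformation guarantees.
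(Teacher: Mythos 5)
Your proof is correct and follows essentially the same route as the paper: lift $f$ by one auxiliary scalar via Lemma~\ref{lemma:2}, pass through an invertible linear change of coordinates, and conclude by projecting with Lemma~\ref{lemma:1}, with everything resting on the fact that $g$ depends only on the variables $x_1,\ldots,x_{n-1}$ that $\phi$ leaves fixed. The only cosmetic difference is that your auxiliary coordinate tracks $g(x_1,\ldots,x_{n-1})$ and you produce $y_n$ through the explicit shear $M$ justified by Lemma~\ref{lemma:3}, with observable $p(y)=g(y_1,\ldots,y_{n-1})$, whereas the paper's auxiliary coordinate tracks $y_n=x_n+g$ directly (its dynamics being $f_n+\cL_f g$), so that only a coordinate permutation and the observable $p(y)=y_n-g(y_1,\ldots,y_{n-1})$ are needed.
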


\begin{proof}
Let $ \phi(x) = [x_1,x_2,\ldots, x_n + g(x_1,x_2,\ldots, x_{n-1})]^\top$ be an elementary transformation, with inverse $$x = \phi^{-1}(y) = [y_1,y_2,\ldots y_n - g(y_1,y_2,\ldots, y_{n-1})]^\top. $$

 We first write the explicit form of $h(y)$.
We have
$\dot y = h(y):= D\phi_x f(x)$ with $x = \phi^{-1} (y)$.  
We can expand this Jacobian as follows 
\begin{equation}\label{eqn:elementary_jacobian}
\begin{aligned} D \phi_x &= \left[\begin{array}{c | c} 
\mathbf{I}_{n-1} & 0\\ 
  \hline 
  \begin{array}{cc}
   \dfrac{\partial g}{\partial x_1}  \cdots \dfrac{\partial g}{\partial x_{n-1}}   &  \\
  \end{array} & 1 
 \end{array}\right] \\
 &= \mathbf{I}_{n} + \left[\begin{array}{c | c} 
\mathbf{O}_{n-1} & 0\\ 
  \hline 
  \begin{array}{cc}
   \dfrac{\partial g}{\partial x_1}  \cdots \dfrac{\partial g}{\partial x_{n-1}}   &  \\
  \end{array} & 0 
 \end{array}\right].
 \end{aligned}\end{equation}

 With the above expansion of Jacobian, we have
 \begin{align}\label{eq:DFF}
 D\phi_x f(x)&= f(x) + \left[\begin{array}{c | c} 
\mathbf{O}_{n-1} & 0\\ 
  \hline 
  \begin{array}{cc}
   \dfrac{\partial g}{\partial x_1}  \cdots \dfrac{\partial g}{\partial x_{n-1}}   &  \\
  \end{array} & 0 
 \end{array}\right] f(x) \notag \\&= f(x) + [0,0,0\ldots \sum_{i=1}^{n-1}  \dfrac{\partial g}{\partial x_i}f_i(x)]^\top
  \end{align}

  Recall that under $y = \phi(x)$, $x_i = y_i, 1\leq i\leq n-1$, so we have $ \dfrac{\partial}{\partial x_i}g(x_1,\ldots,x_{n-1}) =\dfrac{\partial g}{\partial y_i}g(y_1,\ldots,y_{n-1})$.  
 Next, the last row of the right-hand side of~\eqref{eq:DFF} can be expressed as
$f_n(y_1,\ldots,y_{n-1},y_n-g(y_1, \ldots,y_{n-1}) + \sum_{i=1}^{n-1} \dfrac{\partial g}{\partial y_i} f_i(y_1,\ldots, y_n-g(y_1,\ldots,y_{n-1})).$
Together, the above equations give the explicit form of $h(y)$:
$$
h(y) = \begin{bmatrix}
    f_1(y_1,\ldots, y_{n-1},y_n-g(y_1,\ldots,y_{n-1})\\
    \vdots\\
    f_{n-1}(y_1,\ldots, y_{n-1},y_n-g(y_1,\ldots,y_{n-1})\\
    f_n + \sum_{i=1}^{n-1} \dfrac{\partial g}{\partial y_i} f_i    
\end{bmatrix}$$

We now show that $h(y)$ is super-linearizable. 
To this end, consider the dynamics for $z \in \R^{n+1}$ given by%
% introduce $$z  =[x,y_n]^\top = [x_1,\ldots x_n,y_n]^\top.$$ We have 
$$\dot z = \tilde f(z) := \begin{bmatrix}f_1(z_1,\ldots z_n)\\
\vdots
\\
f_n(z_1,\ldots,z_n)\\
f_n(z_1,\ldots z_n) + \sum_{i=1}^{n-1} \dfrac{\partial g}{\partial z_i}(z_1,\ldots,z_{n-1}) f_i\end{bmatrix}  $$
Since $f$ is super-linearizable, we known from  Lemma~\ref{lemma:2}  that $\tilde f$ is super-linearizable as well.

Now, noting that $\frac{d}{dt} (y_n - g(y_1,\ldots,y_{n-1})) = f_n$, we see that for all $y_0 \in \R^n$, if we set $z_0=[y_{0,1},\ldots,y_{0,n-1}, y_{0,n}-g(y_{0,1},y_{0,n-1}),y_{0,n}]^\top$, then it follows from the definitions of $h$ and $\tilde f$ that $z_i(t)=y_i(t)$ for $1 \leq i \leq n-1$ and $z_{n+1}(t)= y_n(t)$.
Now, let us reorder the entries of $z$ as follows: $u = [z_1,\ldots,z_{n-1},z_{n+1},z_n]$ and reorder similarly the entries of $\tilde f$, so that $\dot u = \tilde h (u)$. 

The arguments above shows that 
    $$\Pi_n e^{t \tilde h} u_0 = e^{th}y_0 \mbox{ for all } t \geq 0, y_0 \in \R^{n},
    $$
    where $u_0=[y_{0,1},\ldots,y_{0,n-1},y_{0,n},y_{0,n}-g(y_{0,1})]^\top$
    Hence, using Lemma~\ref{lemma:1}, $\dot {y} = h(y)$  is also super-linearizable.
\end{proof}

Using Proposition~\ref{prop:affine} and Proposition~\ref{prop:elementary} we can conclude that super-linearizability
is preserved under transformations by tame automorphisms.

\subsection{Proof for Corollary~\ref{cor:stablytame}: Case of Stably Tame Automorphisms}
Due to space limitations, we provide a detailed sketch of the proof.
Let $\dot x = f(x)$ be a super-linearizable system, and let $\psi:\R^n \to \R^n$ be a stably tame automorphism that becomes tame in $n+k$ dimensions when using a stabilizing variable $w = g(x)$. Let $\dot y=h(y)$ be the transformed system, where $y = \psi(x)$. 
Let $\phi:\R^{n+k}\to\R^{n+k}$ be the tame automorphism obtained by adjoining $\psi$ with $w$. Note that we can also write $\Pi_{n}(\phi(z)) = \psi(x) $ where $z=[x,w]^\top$. Now, the following steps and the diagram below illustrate the method of going from $\dot x = f(x)$ to $\dot y = h(y)$ by lifting the system first, and we will provide arguments for super-linearizability of $\dot y = h(y)$. 
\[
\begin{tikzcd}
\dot x= f(x) \arrow[r, " y=\psi(x)"] \arrow[d, "z={(x, \, w)}"'] & \dot y = h(y)  \\
\dot z = \tilde f(z) \arrow[r, "r=\phi(z)"'] & \dot r = \tilde h(r) \arrow[u, "y=\Pi(r)"']
\end{tikzcd}
\]
\begin{itemize}
    \item Lift $f$ by '$w$' variables $(w=g(x))$ to $x$: we have $z = [x,w]^\top$ and $\dot z = \tilde f(z)$. From Lemma~\ref{lemma:2}, we have that $\tilde f \in {\mathcal S}_{n+k}$.

    \item Introduce $r = \phi(z)$ and let the dynamics after applying the automorphism $\phi$ be  $\dot r = \tilde h(r)$. Because $\phi$ is {\em tame} by construction,  Theorem~\ref{thm:tame} yields that $\tilde h\in {\mathcal S}_{n+k} $. 

    \item Now, take the projection of the transformed system in the previous step. We have $\Pi_n(r(t)) = \Pi_n(\phi([x,w]^\top(t)) )= \psi(x(t)) = y(t)$  for all  $t \geq 0$. In particular, 
    $$\Pi_n(e^{\tilde h}r_0) =  e^{th}y_0 \mbox{ for all }  t \geq 0$$ where  $r_0 = \phi([x_0,g(x_0)]) = [y_0,g(\psi^{-1}(y_0))].$ (Recall that $\psi$ is an automorphism). Now, using lemma~\ref{lemma:1}, we can argue that $\dot y = h(y)$ is super-linearizable i.e. $h\in {\mathcal S}_{n} $.
\end{itemize}

\subsection{WDG and Elementary transformations}

We now investigate whether the invariance of the WDG condition under polynomial automorphisms. The invariance in this case appears to be more subtle, as we now outline, before concluding with open questions.

We first highlight that strict invariance, as proven in Theorem~\ref{thm:tame} for the case of super-linearizable systems, does not hold. To see this, consider the following system:

 $$\dot x = Ax \mbox{ with } A = \left[\begin{array}{c  c} 
0 & 1\\ 
0  & 0 
 \end{array}\right]$$ 
together with the elementary automorphism $\phi(x_1,x_2) = [x_1,x_2-x_1^2]^\top$, for which $\phi^{-1}(y) = [y_1,y_2+y_1^2]^\top$.
Setting $y = \phi(x_1,x_2)$, we have  and $\dot y = D\phi_{\phi^{-1}(y)}A\phi^{-1}y$ is given by
\begin{equation}\label{eq:wdg_counterexample}
 \begin{aligned}
 \dot y &= \left[\begin{array}{c  c} 
1 & 0\\ 
-2y_1  & 1 
 \end{array}\right]\left[\begin{array}{c  c} 
0 & 1\\ 
0  & 0 
\end{array}\right]\begin{bmatrix}y_1\\y_2+y_1^2\end{bmatrix} = \left[\begin{array}{c } 
 y_2+y_1^2\\ 
-2y_1(y_2+y_1^2)  
 \end{array}\right]\end{aligned}\end{equation}
It can be easily seen that system~\ref{eq:wdg_counterexample}  $\notin \cW_2$, but we know from Theorem~\ref{thm:tame} that it is super-linearizable ($\in {\mathcal S}_2$).

Now, we introduce an observable $p(y_1,y_2):= y_2+y_1^2$ and set $y_3=p$. We have that $\dot y_3=0$. Consider the {\em non-linear} system obtained by adding this one observable to the original dynamics:
 \begin{align*}
\left[\begin{array}{c } 
 \dot y_1\\ 
\dot y_2 \\
\dot y_3
 \end{array}\right] = \left[\begin{array}{c } 
 y_3\\ 
-2y_1y_3\\
0
 \end{array}\right].
 \end{align*}
It clearly satisfies the WDG condition. 

Hence, similar to the case of {\em stably tame} automorphism, which required the addition of stabilizing variables to be tame, we observe that a linear system requires the addition of {\em one} ``stabilizing'' observable to be WDG condition compliant after a change of variables. This is the result we prove next:

\begin{proposition}\label{prop:1}
 Consider the linear system $\dot x = Ax$, with $A = (a)_{ij} \in \R^{n\times n}$, let $\phi$ be an elementary automorphism and $\dot y = h(y)$ be the system obtained after the change of variables $y =\phi(x).$ Then
 \begin{enumerate}
     \item There exists a ``stabilizing'' observable $p:\R^n\to \R$ so that the lifted system  with $p$ is in $\cW_{n+1}$. 
    \item if $(a)_{in} =0 \forall i \neq n$, then $h \in \cW_n$.
    \end{enumerate}
     \end{proposition}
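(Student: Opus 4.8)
The plan is to reduce both claims to a direct reading of the weighted dependency graph, using the basic observation that a purely \emph{linear} drift contributes only constant edge weights; hence the WDG condition of Theorem~\ref{thm:wdg} can fail only along a cycle that traverses one of the nonconstant edges created by the nonlinearity $g$. I would start from the explicit formula for $h$ obtained in the proof of Proposition~\ref{prop:elementary}, specialized to $f(x)=Ax$. Writing $x=\phi^{-1}(y)$ with $x_j=y_j$ for $j\le n-1$ and $x_n=y_n-g(y_1,\ldots,y_{n-1})$, one finds $h_i(y)=\sum_{j=1}^{n-1}a_{ij}y_j+a_{in}\bigl(y_n-g\bigr)$ for $i\le n-1$, while $h_n$ is the only nonlinear component, carrying the extra term $\sum_{i=1}^{n-1}\tfrac{\partial g}{\partial y_i}f_i$.

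For part (2), the hypothesis $a_{in}=0$ for $i\neq n$ annihilates the $(y_n-g)$ contribution in each $h_i$ with $i\le n-1$, so these components are linear and independent of $y_n$. Consequently the only nonconstant entries of $D_yh$ lie in row $n$, and in the WDG the sole outgoing edge of node $n$ is its self-loop of constant weight $a_{nn}$. Every cycle therefore either stays inside the linear block on nodes $\{1,\ldots,n-1\}$ or is this self-loop, and in both cases the product of edge weights is constant; this yields $h\in\cW_n$.

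For part (1) the key device is the stabilizing observable $p(y):=y_n-g(y_1,\ldots,y_{n-1})$, which is exactly the recovered coordinate $x_n$ and hence satisfies the \emph{linear} relation $\dot p=\cL_h p=f_n(\phi^{-1}(y))$. The crucial step—and the one I expect to require the most care—is the choice of lift: I would build the augmented field on $(y_1,\ldots,y_n,y_{n+1})$ by substituting the new coordinate $y_{n+1}$ for every occurrence of the compound expression $(y_n-g)=p$ appearing in $h$, instead of lifting naively. After this substitution the components $H_i$ ($i\le n-1$) and $H_{n+1}$ are linear in $(y_1,\ldots,y_{n-1},y_{n+1})$, all the nonlinearity is confined to $H_n$, and, decisively, no component depends on $y_n$; that is, column $n$ of $D_YH$ vanishes. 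Thus node $n$ has no outgoing edge, lies on no cycle, and every surviving edge carries a constant weight, so the WDG condition holds and the lifted system lies in $\cW_{n+1}$.

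Two points deserve explicit verification. First, one must confirm that the substitution produces a genuine lift, i.e. that the surface $\{y_{n+1}=y_n-g\}$ is invariant so that $\Pi_n$ of the augmented trajectory reproduces $h$; a one-line computation shows $\tfrac{d}{dt}\bigl(y_{n+1}-y_n+g\bigr)\equiv 0$. Second, and this is the genuine obstacle, the WDG condition is \emph{not} invariant under the choice of lift: the naive lift (keeping $h$ intact on the first $n$ coordinates) leaves nonconstant entries in rows $1,\ldots,n$ that generically produce nonconstant cycle products, so it fails the condition. The entire argument therefore hinges on choosing the substitution lift, which is precisely what the stabilizing observable is engineered to supply.
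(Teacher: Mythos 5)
Your proof is correct and takes essentially the same route as the paper: your stabilizing observable $p(y)=y_n-g(y_1,\ldots,y_{n-1})$ is exactly the paper's $w$, your ``substitution lift'' coincides with the paper's lifted system~\eqref{eq:liftedlin} up to swapping the last two coordinates (which leaves the WDG condition unchanged), and part (2) is the same argument that nonconstant weights occur only on edges into node $n$, which has no outgoing edges to close a cycle. If anything, you are slightly more careful than the paper in part (2), where you correctly treat the self-loop at node $n$ as having constant weight $a_{nn}$ rather than asserting $\partial h_n/\partial y_n=0$.
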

\begin{proof}

Let $\phi(x) = [x_1,x_2,\ldots x_n + g(x_1,x_2,\ldots, x_{n-1})]^\top$.
We then have
$\dot y = D\phi_x \dot x = D \phi_{\phi^{-1} (y)} A \phi^{-1}(y)$ where $D\phi_x = D \phi_{\phi^{-1} (y)}$ is the Jacobian of $\phi$ calculated at $x=\phi^{-1}(y)$.  

Using the expansion of $D\phi_x$ given in equation~\eqref{eqn:elementary_jacobian}, we can write:
$$\dot y  = A\begin{bmatrix}y_1\\ \vdots\\ y_{n-1}\\y_n-g(y_1,\ldots,y_{n-1})\end{bmatrix} + \begin{bmatrix}0\\ \vdots \\0\\ \sum_{i=1}^{n-1} \sum_{j=1}^{n} \dfrac{\partial g}{\partial y_i} A_{ij}y_j\end{bmatrix}=h(y)$$

\noindent{\em Proof of item 2.} Assume that $a_{in}=0$ for $i=1,\ldots,n$. We claim that there are no cycles in the WDG of $h(y)$ along which the product of the weights is non-constant. To see this, first note that since  $(a)_{in} =0$ for $i=1\ldots,n$, $h_i$  are linear for $i=1\ldots,n-1$, and nonlinear terms may only appear in $h_n$. 
Since $g$ does not depend on $y_n$, $\frac{\partial h_n}{\partial y_n}=0$ and there are no self-loops on $y_n$ with non-constant weight. The only edges with potentially non-constant weight are thus the edges incoming to $y_n$, and these edges cannot belong to a cycle as there are no outgoing edges from $y_n$ to $y_i, i \neq n$ in the weighted graph (because $a_{in}=0$ implies that $\frac{\partial h_i}{\partial y_n}=0$  for $i=,1\ldots,n$); this proves the claim and the second item of the Proposition.

\noindent{\em Proof of item 1.} We introduce the observable $w = y_n - g(y_1,y_2,y_3,\ldots,y_{n-1}).$ 
We set
  \begin{align*}
 \dot w := \dot y_n - \sum_{i=1}^{n-1} \dfrac{\partial g}{\partial y_i} \dot y_i  =  [A\phi^{-1}(y)]_n,
 \end{align*}
then, the dynamics of $y$ can be written as $$
\dot y  = A[y_1,\ldots, y_{n-1},w)]^\top + [0,0,0\ldots, \sum_{i=1}^{n-1} \sum_{j=1}^{n} \dfrac{\partial g}{\partial y_i} A_{ij}y_j]^\top$$
provided that $w_0=p(y_0)$. This shows that 
setting $z = [y_1,\ldots,y_{n-1},w,y_n]$, we get
 \begin{equation}\label{eq:liftedlin}\dot z = [A[z_1,\ldots z_n]^\top,z_n+\sum_{i=1}^{n-1} \sum_{j=1}^{n} \dfrac{\partial g}{\partial z_i} A_{ij}z_j]^\top.\end{equation}
Using arguments similar to the ones used in the proof of Lemma~\ref{lemma:2}, we can show that system~\eqref{eq:liftedlin} $\in \cW_{n+1}$
\end{proof}

\section{Conclusion and Future Work}

We proved that the set of super-linearizable polynomial vector fields is closed under both tame and stably tame automorphisms. However, the case of systems satisfying the WDG condition---a sufficient condition for super-linearization---is more subtle. We presented an example showing that, in general, the WDG condition is not preserved by elementary automorphisms. Nevertheless, by lifting the system using a single stabilizing observable—whose expression can be {\em explicitly obtained} from the automorphism---a linear system can be transformed into one that satisfies the WDG condition.

This result raises several interesting open questions. For instance, given a polynomial system satisfying the WDG condition, can we always add a {\em few} stabilizing observables to the system obtained after applying an automorphism to maintain the WDG condition? Notably, without restricting to a few observables, this follows directly from our results: since the system is super-linearizable by Theorem~\ref{thm:tame}, the transformed system is also super-linearizable, meaning it can be lifted to a linear system, which naturally satisfies the WDG condition.

Another important direction for future work is to explore whether the WDG condition can be generalized to make it {\em invariant} under automorphisms, thereby removing the need for stabilizing variables. In particular, this leads to the following key question: given a super-linearizable system, is there an automorphism that transforms it into a system that satisfies the WDG condition?

% We further aim to investigate an inverse result of the above: what kind of super-linearizable or otherwise systems can be transformed into linear systems via a polynomial coordinate transformation?
% Further, what kind of transformations preserve WDG condition and whether all super-linearizable systems follow WDG after suitable polynomial change of coordinates? 

\appendix
% \section*{Appendix}

\section{Proof of Lemma 2}\label{subsec:linearoperatorproof}
\begin{proof}
    We know that $\dot x = f(x)$ is superlinearizable if and only if $$\operatorname{span}\{f ,{\mathcal L}_f f,\ldots, \cL_f^k f,\ldots \}$$ is finite dimensional (see Section~\ref{sec:background}).
    Let $z:=Px$,  $\tilde f (z) := Pf(P^{-1}z)$ and consider $\dot z = \tilde f(z)$. We show, using the criterion mentioned above, that $\tilde f(z)$ is super-linearizable. To this end, we show, by induction on $k$, that $$\dim \Span\{f ,{\mathcal L}_f f,\ldots, \cL_f^k f \}=\dim \Span\{\tilde f ,{\mathcal L}_{\tilde f}\tilde f,\ldots, \cL_{\tilde f}^k \tilde f \}.$$

\xc{Base case:} A direct calculation gives \begin{equation*}
    \tilde f(z)= Pf(x),
\end{equation*}
    % \begin{align*}
    % {\mathcal L}_{\tilde f} \tilde f(z) &= P Df_{P^{-1}z}P^{-1} \tilde f(z) \\
    % &= P (Df_{P^{-1}z})P^{-1} Pf(P^{-1}z) \\
    % &= P (Df_{P^{-1}z}) f(P^{-1}z) \\
    % &= P {\mathcal L}_{ f} f (x)
    % \end{align*}
which shows that $\operatorname{span}\{\tilde f\} = P \operatorname{span} \{f\}$.

\xc{Inductive step:} We claim that for all $k \geq 0$, 
$$\cL_{\tilde f}^k \tilde f = P \cL_f^k f.$$
Let $k\geq 1$ and assume the claim is true for all $ 0 \leq \ell \leq k$.
For $k+1$, we have 

\begin{align*}
[{\mathcal L}_{\tilde f}^{k+1} \tilde f]_i &= \sum_{j=1}^n\dfrac{\partial_i {\mathcal L}_{\tilde f}^{k} \tilde f}{\partial  z_j}  \tilde f_j = \sum_{j=1}^n \frac{\partial_i (P {\mathcal L}_{f}^k f)}{\partial  z_j}  \tilde f_j
\\ &= P \sum_{j=1}^n \dfrac{\partial_i ( {\mathcal L}_{ f}^k f)}{\partial  z_j}   \sum_{m=1}^n P_{jm} f_m \\ 
& = P \sum_{j=1}^n \sum_{m=1}^n \sum_{q=1}^n \dfrac{\partial_i ( {\mathcal L}_{ f}^k f)}{\partial  x_q} P^{-1}_{qj} P_{jm} f_m\\
& = P \sum_{j=1}^n \sum_{m=1}^n \sum_{q=1}^n \dfrac{\partial_i ( {\mathcal L}_{ f}^k f)}{\partial  x_q} \delta_{qm} f_m  
\end{align*}
\begin{align*}
&= P \sum_{j=1}^n \sum_{q=1}^n \dfrac{\partial_i ( {\mathcal L}_{ f}^k f)}{\partial  x_q} f_q \\
& =  P[ {\mathcal L}_{ f}^{k+1}  f ]_i =  [P {\mathcal L}_{ f}^{k+1}  f ]_i
\end{align*}
So, we have $\cL_{\tilde f}^{k+1} \tilde f(z)  = P {\mathcal L}_{ f}^{k+1}  f(x) $ as claimed.  We conclude that  $\operatorname{span}\{\tilde f ,{\mathcal L}_{\tilde f} \tilde f,\ldots \} =  \operatorname{span}\{Pf ,P{\mathcal L}_f f,\ldots \}$ and thus their dimensions are the same. This proves the inductive step and the Lemma.
\end{proof}

\section{Non-superlinearizability of Example~\ref{eq:example_sinh}}\label{subsec:ex1_proof}
\begin{proof}
To prove  that \begin{equation}\label{eq:sysnonsup}
\dot z =  \begin{bmatrix} \sqrt{1+z_1^2}\sinh^{-1}{z_2}\\  \sqrt{1+z_2^2}\sinh^{-1}{z_1}\end{bmatrix}=:f(z)
\end{equation}
is not super-linearizable, we will argue that  $\dim \Span\{f,\cL_f f,\ldots \} = +\infty$. 

To this end, let 
\begin{eqnarray*}q_1 &= \sinh^{-1}{z_1},\qquad  
q_2 &= \sinh^{-1}{z_2}, \\ r_1 &= \sqrt{1+z_1^2}, \qquad r_2 &= \sqrt{1+z_2^2}.
\end{eqnarray*}
 
Note that $\sinh^{-1}{z_2} =q_2$ is not a polynomial in $z_2$ and that  $r_2$ is not a polynomial in $z_2$. Further, $q_1$ cannot be expressed as a polynomial in $r_1,r_2,z_1,z_2$.

\noindent{\textbf{Claim 1:}} $q_1,q_1^2,\ldots q_1^n$ are linearly independent.

\begin{proof}
 Suppose that there exists $\alpha_1, \alpha_2, \ldots, \alpha_n \in \R$ such that $$\alpha_1 q_1(z_1) + \alpha_2q_1(z_1)^2 +\ldots \alpha_nq_1(z_1)^n = 0$$ for all  $z_1\in \R$.
In particular, the above expression for $z_1 = 1,2,\ldots ,n$ yield the following linear system for the coefficients $\alpha_i$:
\begin{align}\label{eq:sysvandermonde}
 \underbrace{\left[\begin{array}{c c c c c} 
    q_1(1) & q_1(1)^2 & q_1(1)^3 & \cdots & q_1(1)^n \\
    q_1(2) & q_1(2)^2 & q_1(2)^3 & \cdots & q_1(2)^n \\
    \vdots & \vdots & \vdots & \ddots & \vdots \\
    q_1(n) & q_1(n)^2 & q_1(n)^3 & \cdots & q_1(n)^n 
\end{array}\right]}_{V} \left[\begin{array}{c}
\alpha_1 \\ \alpha_2  \\ \vdots \\ \alpha_n\end{array}\right] =\left[\begin{array}{c}
0 \\ 0 \\ \vdots \\ 0\end{array}\right]
\end{align}
Note that $q_1\neq 0$ for all $z_1$ chosen above. The matrix $V$ on the left of the above equation is a Vandermonde matrix, whose determinant is known to be equal to  $$  \det(V)=\prod\limits_{1\leq i<j\leq n}(q_1(j)-q_1(i)).$$
Now, since the derivative of $\sinh(z)$ never vanishes,  $q_1(i)^m \neq q_1(j)^m$ for all $ m\in \N$ and $ i\neq j$.  We thus have that $\det(V) \neq 0$. Hence, system~\eqref{eq:sysvandermonde} shows that $\alpha_i = 0$, for $1 \leq i \leq n$ which proves the claim.
% Hence, $q_1.q_1^2,\ldots q_1^n$ are linearly independent. 
\end{proof}

%Now, rewriting $f$ with the substitution, we have:

%$f = [r_1q_2,r_2q_1]^\top$

Recalling that system~\eqref{eq:sysnonsup} was obtained by changing variables for $\dot x = [x_2, x_1]^\top$ with $z_1=\sinh x_2$ and $z_2= \sinh x_1$, we have that
$$\left\{\begin{aligned}\dot q_1 &= q_2\\ \dot q_2&= q_1.\end{aligned}\right.
$$
A short calculation show that  $$\left\{\begin{aligned}\dot r_1 &= z_1q_2\\ \dot r_2 &= z_2 q_1.\end{aligned}\right.
$$
\vspace{.3cm}

\mab{Claim 2}: Let $w_1 = p_1(z_1,z_2,r_1,r_2)q_1^n $ where $p_1$ is a polynomial with non-negative coefficients. Then  $$\cL_f w_1 = p_2q_1^{n+1} + p_3q_1^{n} q_2 + p_4q_1^{n-1}q_2$$ where $p_2,p_3,p_4$ are polynomials in $z_1,z_2,r_1,r_2$ with non-negative coefficients. Also, $p_2 \neq 0$ if either  $\frac{\partial p_1}{\partial z_2} \neq 0$ or $\frac{\partial p_1}{\partial r_2} \neq 0$.
\begin{proof}
We can verify the above claim by calculating the total derivative of $w_1$

\begin{align*}
\cL_f w_1 &= \dfrac{\partial w_1}{\partial z_1}\dot z_1 +  \dfrac{\partial w_1}{\partial z_2}\dot z_2 +\dfrac{\partial w_1}{\partial r_1}\dot r_1 +\dfrac{\partial w_1}{\partial r_2}\dot r_2 +\dfrac{\partial w_1}{\partial q_1}\dot q_1    \\
&= q_1^n\dfrac{\partial p_1}{\partial z_1}r_1q_2 + q_1^n\dfrac{\partial p_1}{\partial z_2}r_2q_1 + q_1^n\dfrac{\partial p_1}{\partial r_1}z_1q_2 \\ &+ q_1^n\dfrac{\partial p_1}{\partial r_2}z_2q_1 + p_1q_1^{n-1}q_2\\
& = p_2q_1^{n+1} + p_3q_1^{n} q_2 + p_4q_1^{n-1}q_2
\end{align*}
where $p_2 = \frac{\partial p_1}{\partial z_2}r_2 + \frac{\partial p_1}{\partial r_2}z_2,  p_3 = \frac{\partial p_1}{\partial z_1}r_1+\frac{\partial p_1}{\partial r_1}z_1$ and $p_4= p_1$.

It should be clear that the coefficients of the terms of $p_2,p_3$ and $p_4$ are non-negative if the coefficients of $p_1$ are non-negative. 
Further, also note that if either  $\frac{\partial p_1}{\partial z_2} \neq 0$ or $\frac{\partial p_1}{\partial r_2} \neq 0$,  then $p_2 = \frac{\partial p_1}{\partial z_2}r_2 + \frac{\partial p_1}{\partial r_2}z_2 \neq 0$.
\end{proof}

To prove that $\Span\{f,\cL_f f,\ldots \}$,  is not finite-dimensional, it is sufficient to show that these successive derivatives can be written as polynomials in $q_i$ whose degree is unbounded. We do so next:

\mab{Claim 3}: The term with  highest degree in $q_1$ of $[\cL_f^k f]_2$ is of the form $p(z_1,z_2,r_1,r_2)q_1^{k+1}$ where $p$ is a polynomial with non-negative coefficients in $z_1,z_2,r_1,r_2$ and at least one of $\dfrac{\partial p}{\partial z_2}\neq 0$, $\dfrac{\partial p}{\partial r_2} \neq 0$ holds.

\begin{proof}
We will prove the claim by induction. 

\xc{Base case $k=1$:} We have

$${\mathcal L}_f f = \left[\begin{array}{c} 
 z_1 q_2^2 + r_1q_1\\
 z_2 q_1^2  + r_2q_2
 \end{array}\right].$$
We see that the leading term in $[\cL_f f]_2$ is $z_2q_1^2$, which is of the required form.

\xc{Inductive step:} We assume that the claim is true for all $1\leq \ell \leq  k$. 
The leading term of $[\cL_f^k f]_2$ is thus of the form $w =p_1(z_1,z_2,r_1,r_2)q_1^{k+1}$ where $p$ is a polynomial in $z_1,z_2,r_1,r_2$ with non-negative coefficients and $\frac{\partial p}{\partial z_2}\neq 0$ or $\frac{\partial p}{\partial r_2} \neq 0$.

Given $w$ as above, the degree of $\cL_f w$ in $q_1$ is $k+2$ from claim 2:
$$\cL_f w = p_2q_1^{k+2} + p_3q_1^{k+1} q_2 + p_4q_1^{k}q_2
$$
 where $p_2,p_3,p_4$ are polynomials with non-negative coefficients, and \begin{equation}\label{eq:defp2}p_2 = \dfrac{\partial p_1}{\partial z_2}r_2 + \dfrac{\partial p_1}{\partial r_2}z_2 \neq 0.
 \end{equation}
 From the previous equation,  we obtain that either 
 
 \begin{equation}\label{eq:alternative}\dfrac{\partial p_1}{\partial z_2} \neq 0 \mbox{ and/or } \dfrac{\partial p_1}{\partial r_2} \neq 0
 \end{equation}

Differentiating~\eqref{eq:defp2}, we obtain 
\begin{equation}\label{eq:partialp2}
\left\{ \begin{aligned}
    \dfrac{\partial p_2}{\partial r_2} &= \left(\dfrac{\partial p_1}{\partial z_2} + \dfrac{\partial^2 p_1}{\partial r_2\partial z_2}r_2\right) + (\dfrac{\partial^2 p_1}{\partial r_2^2}z_2) \\
     \dfrac{\partial p_2}{\partial z_2} &= \left(\dfrac{\partial^2 p_1}{\partial z_2^2}r_2 \right) + (\dfrac{\partial^2 p_1}{\partial r_2\partial z_2}z_2 + \dfrac{\partial p_1}{\partial r_2})   
\end{aligned}\right.
\end{equation}

We claim that at least one of $\frac{\partial p_2}{\partial r_2}\neq 0$ and   $\frac{\partial p_2}{\partial z_2}\neq 0$ holds. To verify the claim, assume, to the contrary that both the derivatives vanish. Then, since $p_1$ has non-negative coefficients by assumption, we have from~\eqref{eq:partialp2} that both $\frac{\partial p_1}{\partial z_2}=0$ and $\frac{\partial p_1}{\partial r_2} =0  $, which contradicts~\eqref{eq:alternative} and proves the claim.

Hence, the leading term of $[\cL_f^{k+1} f]_2$ is of the required form---this concludes the proof of the claim.
\end{proof}
Now, since the exponent of the leading term of $[\cL_f^k f]_2$ increases with $k$ as shown in claim 3 and  $q_1,q_1^2,\ldots$ are linearly independent by claim 1 and can not be obtained by any linear combination of polynomials in $z_1,z_2,r_1,r_2$, we have that $$\cL_f^{k+1} f \notin \Span\{f,\cL_f f,\ldots \cL_f^{k} f\} \mbox{ for all } k\in \N.$$ Hence, $\dim \Span\{f,\cL_f f,\ldots\}  = +\infty$ .
\end{proof}

\bibliographystyle{amsplain}
\bibliography{refs.bib}

\providecommand{\bysame}{\leavevmode\hbox to3em{\hrulefill}\thinspace}
\providecommand{\MR}{\relax\ifhmode\unskip\space\fi MR }
% \MRhref is called by the amsart/book/proc definition of \MR.
\providecommand{\MRhref}[2]{%
  \href{http://www.ams.org/mathscinet-getitem?mr=#1}{#2}
}
\providecommand{\href}[2]{#2}
\begin{thebibliography}{10}

\bibitem{amini2022carleman}
Arash Amini, Cong Zheng, Qiyu Sun, and Nader Motee, \emph{Carleman linearization of nonlinear systems and its finite-section approximations}, arXiv preprint arXiv:2207.07755 (2022).

\bibitem{arathoon2023koopman}
Philip Arathoon and Matthew~D Kvalheim, \emph{Koopman embedding and super-linearization counterexamples with isolated equilibria}, arXiv preprint arXiv:2306.15126 (2023).

\bibitem{bass1985polynomial}
Hyman Bass and Gary Meisters, \emph{Polynomial flows in the plane}, Adv. in Math \textbf{55} (1985), no.~2, 173--208.

\bibitem{belabbas2023sufficient}
Mohamed-Ali Belabbas and Xudong Chen, \emph{A sufficient condition for the super-linearization of polynomial systems}, Systems \& Control Letters \textbf{179} (2023), 105588.

\bibitem{bevanda2021koopman}
Petar Bevanda, Stefan Sosnowski, and Sandra Hirche, \emph{Koopman operator dynamical models: Learning, analysis and control}, Annual Reviews in Control \textbf{52} (2021), 197--212.

\bibitem{coomes1991linearization}
Brian Coomes and Victor Zurkowski, \emph{Linearization of polynomial flows and spectra of derivations}, Journal of Dynamics and Differential Equations \textbf{3} (1991), 29--66.

\bibitem{hosler1989polynomial}
Gary Hosler-Meisters, \emph{Polynomial flows on {$\R^{n}$}}, Banach Center Publications \textbf{1} (1989), no.~23, 9--24.

\bibitem{jung1942ganze}
Heinrich~WE Jung, \emph{{\"U}ber ganze birationale transformationen der ebene.},  (1942).

\bibitem{jungers2019non}
Rapha{\"e}l~M Jungers and Paulo Tabuada, \emph{Non-local linearization of nonlinear differential equations via polyflows}, 2019 American Control Conference (ACC), IEEE, 2019, pp.~1--6.

\bibitem{ko2024minimum}
Jehyung Ko and Mohamed-Ali Belabbas, \emph{Minimum number of observables and system invariants in super-linearization}, IEEE Control Systems Letters (2024).

\bibitem{kuttykrishnan2009some}
Sooraj Kuttykrishnan, \emph{Some stably tame polynomial automorphisms}, Journal of Pure and Applied Algebra \textbf{213} (2009), no.~1, 127--135.

\bibitem{kvalheim2023linearizability}
Matthew~D Kvalheim and Philip Arathoon, \emph{Linearizability of flows by embeddings}, arXiv preprint arXiv:2305.18288 (2023).

\bibitem{levine1986nonlinear}
J~Levine and R~Marino, \emph{Nonlinear system immersion, observers and finite-dimensional filters}, Systems \& Control Letters \textbf{7} (1986), no.~2, 133--142.

\bibitem{shestakov2004tame}
Ivan Shestakov and Ualbai Umirbaev, \emph{The tame and the wild automorphisms of polynomial rings in three variables}, Journal of the American Mathematical Society \textbf{17} (2004), no.~1, 197--227.

\bibitem{shestakov2003nagata}
Ivan~P Shestakov and Ualbai~U Umirbaev, \emph{The nagata automorphism is wild}, Proceedings of the National Academy of Sciences \textbf{100} (2003), no.~22, 12561--12563.

\bibitem{smith1989stably}
Martha~K Smith, \emph{Stably tame automorphisms}, Journal of Pure and Applied Algebra \textbf{58} (1989), no.~2, 209--212.

\bibitem{umirbaev2007anick}
Ualbai~U Umirbaev, \emph{The anick automorphism of free associative algebras},  (2007).

\bibitem{van1994locally}
Arno Van Den~Essen, \emph{Locally finite and locally nilpotent derivations with applications to polynomial flows, morphisms and {${\mathcal G}_\alpha-actions$}. ii}, Proceedings of the American Mathematical Society \textbf{121} (1994), no.~3, 667--678.

\end{thebibliography}

\end{document}